\definecolor{blue0}{RGB}{0,77,153} 
\definecolor{red0}{RGB}{179,0,77} 
\definecolor{green0}{RGB}{134,219,76} 
\definecolor{gray0}{RGB}{84,97,110}
\numberwithin{equation}{section}
\newtheorem{theorem}{Theorem}[section]
\newtheorem{proposition}[theorem]{Proposition}
\newtheorem{lemma}[theorem]{Lemma}
\newtheorem{definition}[theorem]{Definition}
\newtheorem{remark}[theorem]{Remark}
\DeclareMathOperator{\Tr}{Tr}
\DeclareMathOperator{\Ker}{Ker}
\DeclareMathOperator{\diag}{diag}
\DeclareMathOperator{\Conv}{Conv}
\DeclareMathOperator{\supp}{supp}
\newcommand{\calC}{\ensuremath{\mathcal{C}}}
\newcommand{\calD}{\ensuremath{\mathcal{D}}}
\newcommand{\calF}{\ensuremath{\mathcal{F}}}
\newcommand{\bbE}{\ensuremath{\mathbb{E}}}
\newcommand{\bbR}{\ensuremath{\mathbb{R}}}
\newcommand{\bbS}{\ensuremath{\mathbb{S}}}
\numberwithin{equation}{section}
\def\vs#1{\vspace{#1mm}}
\def\be{\begin{align}}
\def\ee{\end{align}}
\def\b*{\begin{eqnarray*}}
\def\e*{\end{eqnarray*}}
\def\be{\begin{eqnarray}}
\def\ee{\end{eqnarray}}
\def\beq{\begin{equation}}
\def\eeq{\end{equation}}
\def\b*{\begin{eqnarray*}}
\def\e*{\end{eqnarray*}}
\def\bi{\begin{itemize}}
\def\ei{\end{itemize}}
\def \1{{\bf 1}}
\def\={\;=\;}
\def\diag#1{\mbox{\rm diag}\left[#1\right]}
 \def\vs#1{\vspace{#1mm}}
\def \D{\mathbb{D}}
\def \E{\mathbb{E}}
\def \F{\mathbb{F}}
\def \M{\mathbb{M}}
\def \P{\mathbb{P}}
\def \R{\mathbb{R}}
\def\Cc{{\cal C}}
\def\Dc{{\cal D}}
\def\Ec{{\cal E}}
\def\Fc{{\cal F}}
\def\Lc{{\cal L}}
\def\Nc{{\cal N}}
\def\Lc{{\cal L}}
\def\D{{\rm D}}
\newcommand{\geometricinterpretationjumps}{%
	\begin{tikzpicture}[thick,scale=0.9, every node/.style={scale=0.9}]
	\node (C) at (0,0) {};
	\node (r) at (-4,0) {};
	\node (l) at (4,0) {};
	\fill[pattern color=lightgray, pattern=north west lines,looseness=2.2] (r) to[out=-70,in =-110]  (C) to [out=70,in=110] (l) arc (0:90:4.0cm) arc (90:180:4.0cm);
	\draw[name path=boundary, thick,black,looseness=2.2]  (r) to[out=-70,in =-110] coordinate[label={[yshift=-.15em]below:(i)}](A) (C) to [out=70,in=110]coordinate[label={[yshift=-.25em]below:(iii)}](B) (l);
	\draw[ultra thin, dashed]    (C)coordinate[label=below  right:(ii)](C)--+(-110:0.5)coordinate(C1)--+(70:0.5)coordinate[label={[xshift=0]right:\textit{\textcolor{blue0}{C}}}](C2);
	\draw[ultra thin, dashed]  (A)--+(0:1.05)coordinate[label={[yshift=-.25em]below:\textit{\textcolor{blue0}{C}}}](A1)--+(180:1.05)coordinate(A2);
	\draw[ultra thin, dashed]  (B)--+(0:1.05)coordinate(B1)--+(180:1.05)coordinate[label={[xshift=0.5em, yshift=0]above:\textit{\textcolor{blue0}{C}}}](B2);
	\draw[->,  ultra thick, dashed , red0]  (A1)--+(90:1)coordinate[label={[yshift=0]\textit{\textcolor{red0}{b}}}](A3);
	\draw[->,  ultra thick, dashed , red0]  (B2)--+(-90:0.5)coordinate[label={[xshift=0,yshift=0]left:\textit{\textcolor{red0}{b}}}](B3);
	\draw[->, ultra thick, dashed , red0]  (B2)--+(90:0.5)coordinate(B3);
	
	\draw[->,  ultra thick, dashed , red0]  (C)--+(-200:0.5)coordinate[label={[yshift=0]\textit{\textcolor{red0}{b}}}](C3);
	\draw[<->, ultra thick, blue0] (A1) -- (A2);
	\draw[<->, ultra thick, blue0] (B1) -- (B2);
	\draw[<->, ultra thick, blue0] (C1) -- (C2);
	
	\node (s0) at (-1.4,1.4) {\textcolor{green0}{$\rho$}};
	\draw[->, ultra thick, dashed, green0] (s0)--+(-160:1.5);
	\draw[->, ultra thick, dashed, green0] (s0)--+(20:1.5);
	\draw[->, ultra thick, dashed, green0] (s0)--+(80:1.5);
	\draw[->, ultra thick, dashed, green0] (s0)--+(-100:1.5);
						
	\node at (-2.2,2.5) {\Large $\calD$}; 
	\end{tikzpicture}	
}%
\title{{Stochastic invariance of closed sets  for  jump-diffusions with non-Lipschitz coefficients}}	
\date{\today}
\author{Eduardo {Abi Jaber} \thanks{abijaber@ceremade.dauphine.fr,  {eduardo.abijaber@axa-im.com. I would like to thank Bruno Bouchard and Camille Illand for very fruitful discussions and insightful comments.}}}
\affil{Universit\'e Paris-Dauphine, PSL Research University, CNRS, UMR [7534], CEREMADE, 75016 Paris, France.}
\affil{AXA Investment Managers,  Multi Asset Client Solutions, Quantitative Research, \break
                        6 place de la Pyramide, 92908 Paris - La Défense, France.}
\begin{document}

	\maketitle

	\begin{abstract}
	We provide necessary and sufficient first order geometric conditions for the stochastic invariance of a closed subset of $\R^d$ with respect to a jump-diffusion under weak regularity assumptions on the coefficients. Our main result extends the recent characterization proved in Abi Jaber, Bouchard and Illand (2016) to jump-diffusions. We also derive an equivalent formulation in the semimartingale framework.\\ \\

		\noindent  {\textit{Keywords:}} Stochastic differential equation, jumps, semimartingale,  stochastic invariance. \\ 
		
		\noindent  {\textit{MSC Classification:}}  93E03,  60H10, 60J75.
		
	\end{abstract}

\section{Introduction}

We consider a weak solution to the following stochastic differential equation with jumps
\begin{equation}\label{diffusionsdeinvariance}
dX_t=b(X_t)dt+\sigma(X_t)dW_t+\int \rho(X_{{t-}},z) \left(\mu(dt,dz) - F(dz)dt \right), \quad X_0=x,
\end{equation}

that {is:} a filtered probability space $(\Omega, \Fc, \F=(\Fc)_{t\geq0},\P)$ satisfying the usual conditions  and supporting a $d$-dimensional Brownian motion $W$, a  Poisson random measure $\mu$ on $\R_+ \times \R^d$ with compensator $dt \otimes F(dz)$, and a $\F$-adapted process $X$ with càdlàg sample paths such that \eqref{diffusionsdeinvariance} holds $\P$-almost surely.\vs2

Throughout this paper, we assume that $b:\R^d \mapsto \R^d$, $\sigma: \R^d \mapsto \M^d$ and    $\rho: \R^d \times \R^d \mapsto \R^d$ are measurable, where $\M^d$ denotes the space of $d \times d$ matrices.  In addition, we assume that 
\begin{align}
b, \sigma \mbox{ and } \int \rho(.,z)^\top H(\rho(.,z))\rho(.,z) F(dz) \mbox{ are continuous for any } H \in \Cc_b(\R^d,\M^d)  \label{conditionscontinuity} \tag{$H_{\Cc}$},
\end{align}
where $\Cc_b(\R^d,\M^d)$ denotes the space of $\M^d$-valued continuous bounded functions on $\R^d$. We also assume that there exist $q,L >0$ such that, for all $x \in \R^d$,
	\begin{align}
	\int_{\{\|\rho(x,z)\| > 1\}}  \|\rho(x,z)\|^q \ln \|\rho(x,z)\| F(dz) &\leq  L (1+ \|x\|^{q}), \label{growthconditionsln} \tag{$H_0$} \\
\|b(x)\|^2 + \|\sigma \sigma^\top(x)\| + \int  \|\rho(x,z)\|^2 F(dz) &\leq  L (1+ \|x\|^2).  \label{growthconditions} \tag{$H_1$} 
\end{align}

Let $\Dc$ denote a closed subset of $\R^d$. Our aim is to characterize the stochastic invariance (a.k.a~viability) of $\Dc$ under weak regularity assumptions, i.e.~find necessary and sufficient conditions on the coefficients such that, for all $x \in \Dc$, there exists a $\Dc$-valued  {weak} solution to \eqref{diffusionsdeinvariance} starting at $x$.  \vs2

Invariance and viability problems have been intensively studied in the literature, first in a deterministic setup \cite{aub91} and later in a  random environment. For the diffusion case, see \cite{ajbi16,da,buc} and the references therein. In the presence of jumps, we refer to \cite{sim,vee,ftt14}. Note that a first order characterization for  a smooth volatility matrix $\sigma$ is given in \cite{ftt14}, where the Stratonovich drift appears (see \cite{da} for the diffusion case). For a second order characterization, we refer to \cite[Propositions 2.13 and 2.15]{vee}. \vs2

Combining the techniques used in \cite{ajbi16, vee},  we derive for the first time in Theorem \ref{theoremjumps} below, a first order geometric characterization of the stochastic invariance with respect to \eqref{diffusionsdeinvariance} when the volatility matrix $\sigma$ can fail to be differentiable. We also provide an equivalent formulation of the stochastic invariance with respect to semimartingales in Theorem \ref{theoremsemi}. This extends \cite{ajbi16} to the jump-diffusion case.  From a practical perspective, this is the first known first order characterization that could be  directly applied to construct affine \cite{dfs,kal} and polynomial processes \cite{cuchkr} on any arbitrary closed sets, since for these processes the volatility matrix can fail to be differentiable (on the boundary of the domain).  \vs2

In fact, in the sequel,  we only make the following assumption on the covariance matrix  
	\begin{equation} \label{eq: extension C}
	 {C:=\sigma \sigma^\top \mbox{ on } \Dc \mbox{ can be extended to a } C^{1,1}_{loc}(\bbR^d,\bbS^d) \mbox{ function},}\tag{$H_2$}
	\end{equation}
	in which $\mathcal{C}^{1,1}_{loc}$ means $\Cc^{1}$ with {a} locally Lipschitz {derivative} and $\bbS^d$ denotes the set of $d \times d$ symmetric matrices. Note that we do not impose the extension of $C$ to be positive semi-definite outside $\Dc$, so that $\sigma$ might only match with its square-root on $\Dc$. Also, it should be clear that the extension {needs} only to be local around $\Dc$.\vs2
	
	From now on we use the same notation $C$ for  $\sigma\sigma^{\top}$ on $\Dc$ and for its extension defined in Assumption \eqref{eq: extension C}. All identities involving random variables have to be considered in the a.s.~sense, the probability space and the probability measure being given by the context. Elements of $\R^{d}$ are viewed as column vectors. We use the standard notation $I_{d}$ to denote the $d\times d$ identity matrix and denote by $\M^{d}$ the collection of $d\times d$ matrices. We say that $A\in \mathbb{S}^{d}$ (resp.~$\mathbb{S}^{d}_{+}$) if it is a symmetric (resp.~and positive semi-definite) element of $\M^{d}$. {Elements of $\R^{d}$ are viewed as column vectors.}
	Given $x=(x^{1},\ldots,x^{d})\in \R^{d}$, $\diag{x}$ denotes the diagonal matrix whose $i$-th diagonal component is $x^{i}$.  If $A$ is a symmetric positive semi-definite matrix, then $A^{\frac12}$ stands for its symmetric square-root. \vs2

The rest of the paper is organized as follows. {Our main results are stated  and proved in Sections \ref{SectionMainJumps}-\ref{SectionMainSemimartingale}}. {In the Appendix, we adapt to our setting some technical results, mainly from \cite{ajbi16}.}

\section{Stochastic invariance for SDEs}\label{SectionMainJumps}
{In order to ease the comparison with \cite{ajbi16}, we first provide in Theorem \ref{theoremjumps} below a characterization of the invariance for stochastic differential equations with jumps. An equivalent formulation in terms of semimartingales is also provided in the next section (see Theorem \ref{theoremsemi} below). We  insist on the fact that the two formulations are equivalent by the representation theorem of semimartingales with characteristics as in \eqref{eq:semimartingale char} below in terms of a Brownian motion and a Poisson random measure (see \cite[Theorem 2.1.2]{JP11}).}\vs2

We start by making precise the definition of stochastic invariance\footnote{{The concept is also often known as viability. We use the term invariance here in order to stay coherent with the affine/polynpmial literature.}} for stochastic differential equations with jumps.

\begin{definition}[\textit{Stochastic invariance}]\label{def: stock inv} A closed subset $\mathcal{D} \subset \mathbb{R}^d$ is said to be \textit{stochastically invariant} with respect to the jump-diffusion \eqref{diffusionsdeinvariance} if, for all $x \in \mathcal{D}$, there exists a weak solution $X$ to \eqref{diffusionsdeinvariance} starting at $X_0=x$ such that $X_t \in \mathcal{D}$ for all $t\geq0$, almost surely. 
\end{definition}

The following theorem provides a first order geometric characterization of the stochastic invariance using  the (first order) normal cone   $\mathcal{N}_{\mathcal{D}}(x)$ at $x$  consisting of all outward pointing vectors, 
\begin{equation*}\label{eq: def first order cone}
\mathcal{N}_{\mathcal{D}}(x) :=\left\{u \in \mathbb{R}^d: \langle u, y-x \rangle \leq o(\|y-x\|), {\forall\; y \in \calD} \right\}.
\end{equation*}

\begin{theorem}\label{theoremjumps}
	Let $\Dc \subset \R^d$ be closed. Under the continuity assumptions \eqref{conditionscontinuity} and \eqref{growthconditionsln}-\eqref{eq: extension C}, the set $\Dc$ is stochastically invariant with respect to the jump-diffusion \eqref{diffusionsdeinvariance} if and only if 
	\begin{subnumcases}{}
	 x + \rho(x,z) \in \Dc, \; \mbox{for } F\mbox{-almost all } z, \label{ourcondsupp}\\
	\int |\langle u, \rho(x,z) \rangle | F(dz) < \infty, \label{ourcondjump}\\
	C(x) u =0, \label{ourcondcov}\\
	\langle  u, b(x) -\int \rho(x,z) F(dz) -\frac{1}{2} \sum_{j=1}^{d} D C^j(x)(CC^+)^j(x)    \rangle \leq 0, \label{ourconddrift}
	\end{subnumcases}
	for all $x \in \mathcal{D}$ and $u \in \mathcal{N}_{\mathcal{D}}(x)$, in  which $DC^j(x)$ denotes the Jacobian of the $j$-th column of $C(x)$ and $(CC^+)^j(x)$ is the $j$-th column of $(CC^{+})(x)$ with $C(x)^+$ defined as the Moore-Penrose pseudoinverse\footnote{The Moore-Penrose pseudoinverse of a $m \times n$ matrix $A$ is the unique $n \times m $ matrix $A^+$ satisfying: $AA^+A=A$, $A^+AA^+=A^+$, $AA^{+}$ and $A^{+}A$ are Hermitian.} of $C(x)$. 
	\end{theorem}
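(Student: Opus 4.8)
The plan is to prove the two directions separately. For the \emph{necessity} direction, I would assume $\Dc$ is stochastically invariant and extract the four geometric conditions by testing the dynamics against outward normal vectors $u \in \Nc_{\Dc}(x)$. The jump condition \eqref{ourcondsupp} should come from the fact that an invariant solution cannot leave $\Dc$ across a jump: since $X$ is $\Dc$-valued and jumps of size $\rho(X_{t-},z)$ occur with intensity governed by $F$, the post-jump point $x+\rho(x,z)$ must lie in $\Dc$ for $F$-almost every $z$; this is where I would invoke the support of the jump measure and a localization around the starting point. The integrability condition \eqref{ourcondjump} and the covariance condition \eqref{ourcondcov} are the classical first-order constraints: $C(x)u=0$ is forced because any nondegenerate diffusion in the normal direction would immediately push the process out of $\Dc$ (the Brownian part must be tangent), and \eqref{ourcondjump} is needed merely to make the compensated-jump and drift terms separately well-defined when pairing with $u$.

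The core of the necessity argument is the drift inequality \eqref{ourconddrift}. The strategy, following \cite{ajbi16} and \cite{vee}, is to consider the scalar process $\langle u, X_t - x\rangle$ (or more robustly a local signed-distance-type test function $\phi$ with $\nabla\phi(x)=u$), apply Itô's formula with jumps, and use that this quantity must be $\leq o(\cdot)$ along an invariant path so its drift at $t=0$ is nonpositive. The delicate correction term $-\tfrac12\sum_j DC^j(x)(CC^+)^j(x)$ is exactly the second-order Itô contribution reorganized via the $C^{1,1}_{loc}$ extension of $C$: because $\sigma$ need not be differentiable, one replaces the usual Stratonovich drift by this Moore--Penrose-regularized expression, and the identity $C(x)u=0$ together with the structure of $CC^+$ (the orthogonal projector onto the range of $C$) is what lets the awkward $\sigma$-derivatives be rewritten purely in terms of $DC$ and the pseudoinverse. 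I would lean on the technical lemmas relating $\sigma\sigma^\top$ derivatives to $C$ derivatives that the paper defers to the Appendix.

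For the \emph{sufficiency} direction, the plan is to construct a $\Dc$-valued weak solution assuming \eqref{ourcondsupp}--\eqref{ourconddrift} hold. I would build the solution by a penalization/approximation scheme: smooth or mollify the coefficients, solve the approximate SDEs to obtain solutions, and pass to the limit using the growth bounds \eqref{growthconditionsln}--\eqref{growthconditions} to secure tightness and the continuity assumption \eqref{conditionscontinuity} to identify the limiting martingale problem. The geometric conditions are then shown to be preserved in the limit, forcing the limit process to remain in $\Dc$; concretely, one verifies that a suitable distance function to $\Dc$ remains a supermartingale (up to the correct local-time/drift control) so that, starting in $\Dc$, the process never exits. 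The jump condition \eqref{ourcondsupp} guarantees that jumps land back in $\Dc$, while \eqref{ourcondcov} and \eqref{ourconddrift} control the continuous part near the boundary.

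The main obstacle I expect is handling the non-Lipschitz, possibly non-differentiable volatility $\sigma$. Existence of weak solutions is not automatic, and the first-order drift correction term is subtle precisely because $\sigma$ is only known through $C=\sigma\sigma^\top$ and its $C^{1,1}_{loc}$ extension \eqref{eq: extension C}. The key difficulty is to justify rigorously that the second-order Itô term can be expressed via $DC$ and the pseudoinverse rather than via $D\sigma$ (which may not exist), and to control the compensated jumps under only the logarithmic-moment bound \eqref{growthconditionsln}; this is where combining the diffusion techniques of \cite{ajbi16} with the jump analysis of \cite{vee} becomes essential, and where the Appendix lemmas carry the technical weight.
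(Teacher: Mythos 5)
The necessity half of your plan is broadly aligned with the paper's strategy but omits the two devices that actually make it work. First, to isolate the jump contribution (both for the support condition \eqref{ourcondsupp} and for separating $\int\rho\,dF$ from the drift) the paper does not simply ``test against $u$'': it performs a change of measure by the Dol\'eans--Dade exponential $\Ec(\psi_\eta\ast(\mu-\nu))$, which tilts the compensator to $(1+\psi_\eta)F$, compares the resulting generator identity with the original one, and lets $\eta\downarrow 0$ by monotone convergence; this is how $\int\phi_\epsilon(x+\rho(x,z))F(dz)=0$, and hence \eqref{ourcondsupp}, is extracted. Second, and more importantly, ``the drift at $t=0$ is nonpositive'' is not by itself enough to produce both $C(x)u=0$ and the corrected drift inequality: the paper conditions the It\^o expansion on the sub-filtration generated by the nondegenerate part $\bar B$ of the Brownian motion (Lemma \ref{lemmacondexpectationfiltration}), which annihilates the orthogonal Brownian part and the compensated jump martingale, and then invokes the small-time behavior of double stochastic integrals (Lemma \ref{lemmadoubleintsto}) to read off $\alpha=0$ and $\theta_0-\frac12\Tr(\gamma_0)\le 0$ simultaneously. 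Your heuristic that a nondegenerate normal diffusion would push the process out is the right intuition, but it does not yield the quantitative second-order term $-\frac12\sum_j DC^j(CC^+)^j$; that term arises from $D(D\phi\,\bar\sigma)\bar\sigma$ via the identity \eqref{eqsigmabarc} of Lemma \ref{lemmadistincteigenmatrix}, which in turn requires the local $\Cc^{1,1}$ square-root $\bar\sigma$ built from the distinct-eigenvalue reduction --- a step you allude to but do not carry out.

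The sufficiency half is where your route genuinely diverges and, as written, has a gap. You propose mollifying the coefficients, passing to the limit by tightness, and showing that a ``suitable distance function to $\Dc$'' is a supermartingale. For an arbitrary closed set $\Dc$ --- no boundary regularity is assumed anywhere in the theorem --- the distance function is not $\Cc^2$, and mollified coefficients will in general violate \eqref{ourcondsupp}--\eqref{ourconddrift}, so neither half of that scheme goes through. The paper instead shows that \eqref{ourcondsupp}--\eqref{ourconddrift} imply the positive maximum principle for $\Lc$ on $\Cc_c^\infty(\Dc)$ and applies \cite[Theorem 4.5.4]{eth} to obtain a $(\Dc\cup\{\Delta\})$-valued solution of the martingale problem, then rules out killing and explosion and converts back to a weak solution via \cite{kur11}. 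This is also precisely where the logarithmic moment condition \eqref{growthconditionsln} enters: it is needed (Lemma \ref{lemmalcontinuity}) to guarantee $\Lc\phi\in\Cc_0(\Dc)$ --- vanishing at infinity, not merely bounded --- without which the Ethier--Kurtz theorem is not applicable. Your proposal notices the log-moment bound but does not identify this as its purpose.
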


\begin{figure}[h!]
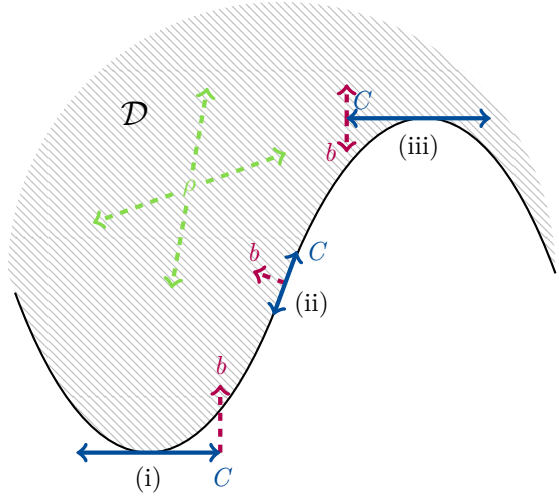

	\centering
	\begin{center}
\geometricinterpretationjumps
	\end{center}
	\rule{35em}{0.5pt}
	\caption[fig:geometricinterpretation]{Interplay between the  geometry/curvature of $\Dc$ and the coefficients $(b,C,\rho)$.}
	\label{fig:geometricinterpretation}
\end{figure}

Before moving to the proof, we start by giving the geometric interpretation of conditions \eqref{ourcondsupp}-\eqref{ourconddrift}, also shown in Figure \ref{fig:geometricinterpretation}. Condition \eqref{ourcondcov} states that at the boundary of the domain, the column of the covariance matrix should be tangential to the boundary, while  \eqref{ourcondsupp} requires from $\Dc$ to capture all the jumps of the process. Moreover, at the boundary, the jumps can have infinite variation only if they are {tangent} to the boundary, by \eqref{ourcondjump}. Finally, it follows from \eqref{ourconddrift} that the compensated drift should be inward pointing. We  notice that the compensated drift  extends the Stratonovich drift (see \cite{da,ftt14}) when the volatility matrix can fail to be differentiable. In fact, if the volatility matrix is smooth, \cite[Proposition 2.4]{ajbi16}  yields  
 
\begin{equation*}
 \langle u, \sum_{j=1}^{d} D \sigma^j(x)\sigma^j(x) \rangle = \langle u, \sum_{j=1}^{d} D C^j(x)(CC^+)^j(x) \rangle, \quad \mbox{for all } x  \in \mathcal{D} \mbox{ and  } u \in \Ker\sigma(x)^\top.
	\end{equation*}
Conversely, the example of the square root process $C(x)=x$ and $\sigma(x)=\sqrt{x}$ on $\Dc:=\R_+$ shows that $\sigma$ may fail to be differentiable at $0$ while $C$ satisfies \eqref{eq: extension C}. \vs2

{The proof of Theorem \ref{theoremjumps} adapts the argument of \cite{ajbi16}  combined with techniques taken from \cite{vee} to handle the jump component.  For the necessity, we use the same conditioning/projection argument together with the small time behavior of double stochastic integrals as in  \cite{ajbi16}. For this we need to inspect the regularity  of $\sigma$, this is the object of Lemma \ref{lemmadistincteigenmatrix} below. For the sufficiency, we show that conditions  \eqref{ourcondsupp}-\eqref{ourconddrift} imply the positive maximum principle for the infinitesimal generator and we conclude by applying \cite[Theorem 4.5.4]{eth}, which is possible by   	Lemma \ref{lemmalcontinuity} below. The latter lemma highlights the role of the growth condition \eqref{growthconditionsln}. In fact,   \eqref{growthconditions} would only yield that $\Lc \phi$ is bounded. This is not enough to apply \cite[Theorem 4.5.4]{eth}.
}
\vs2 

We first recall the following crucial lemma. This is an immediate consequence of the implicit function theorem giving the regularity of the  distinct eigenvalues of $C$ and  their corresponding eigenvectors under  \eqref{eq: extension C}.  We refer to \cite[Lemma 3.1]{ajbi16} for the proof. 
\begin{lemma}\label{lemmadistincteigenmatrix}
	Assume that $C \in \mathcal{C}^{1,1}_{loc}(\mathbb{R}^d,\mathbb{S}^d)$.
	Let $x \in \mathcal{D}$ be such that the spectral decomposition of $C(x)$ is given by
	\begin{equation*}
	C(x)=Q(x)\diag{\lambda_1(x),\ldots ,\lambda_r(x),0,\ldots ,0}Q(x)^\top,
	\end{equation*}
	with $\lambda_1(x) >\lambda_2(x) >\cdots >\lambda_r(x)>0$ and $Q(x)Q(x)^\top=I_d$, $r\le d$.
	
	Then there {exist} an open (bounded) neighborhood $N(x)$ of $x$ and two measurable $\M^{d}$-valued functions on $\bbR^d$, $y \mapsto Q(y):=[q_1(y)\cdots q_d(y)]$ and $y \mapsto \Lambda(y):=\diag{\lambda_1(y),\ldots ,\lambda_d(y)}$ such that
	\begin{enumerate}[{\rm (i)}]
		\item
		$C(y)=Q(y)\Lambda(y)Q(y)^\top$ and $Q(y)Q(y)^\top=I_d$, for all $y \in \bbR^d$,
		\item
		$\lambda_1(y) >\lambda_2(y) >\ldots>\lambda_r(y)>\max\{\lambda_{i}(x), r+1\le i\le d\}\vee 0$, for all $y \in N(x)$,
		\item
		$\bar{\sigma}: y \mapsto \bar{Q}(y)   \bar{\Lambda}(y)^{\frac12}$ is $C^{1,1}(N(x),\mathbb{M}^d)$, in which 
		$\bar{Q}:=[q_1\cdots q_r\; 0\cdots 0]$ and  $\bar{\Lambda}=$ ${\rm diag}[\lambda_1,\ldots,$ $\lambda_r,0,\ldots,0]$.
		
	\end{enumerate}
	
	Moreover, we have: 
	\begin{equation}\label{eqsigmabarc}
	\langle u, \sum_{j=1}^{d} D  \bar{\sigma}^j(x) \bar{\sigma}^j(x) \rangle = \langle u, \sum_{j=1}^{d} D  C^j(x) (CC^+)^j(x)  \rangle, \quad \mbox{for all } u \in \Ker(C(x)).
	\end{equation}
\end{lemma}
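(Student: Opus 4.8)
The plan is to read off the regularity of the spectral data of $C$ from the classical implicit function theorem, exploiting that the $r$ positive eigenvalues $\lambda_1(x)>\cdots>\lambda_r(x)>0$ are \emph{simple}. For each $i\le r$ I would fix a small positively oriented circle $\Gamma_i\subset\mathbb{C}$ enclosing $\lambda_i(x)$ and no other eigenvalue of $C(x)$, and introduce the associated spectral projection
\begin{equation*}
P_i(y)=\frac{1}{2\pi\mathrm{i}}\oint_{\Gamma_i}(\zeta I_d-C(y))^{-1}\,d\zeta .
\end{equation*}
Since $C\in\mathcal{C}^{1,1}_{loc}$ and matrix inversion is smooth where nonsingular, the integrand depends $\mathcal{C}^{1,1}$ on $y$, uniformly for $\zeta$ on the compact contour $\Gamma_i$, as long as no eigenvalue crosses $\Gamma_i$; integrating over $\Gamma_i$ then shows that each $P_i$ is $\mathcal{C}^{1,1}$ on a neighbourhood $N(x)$. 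On $N(x)$ the ranges of the $P_i$ remain one-dimensional, $\lambda_i(y)=\Tr(C(y)P_i(y))$ is $\mathcal{C}^{1,1}$, and $q_i(y):=P_i(y)q_i(x)/\|P_i(y)q_i(x)\|$ is a well-defined $\mathcal{C}^{1,1}$ eigenvector (the denominator is near $1$ around $x$). Equivalently, one applies the implicit function theorem directly to the $\mathcal{C}^{1,1}$ augmented system $(C(y)-\lambda)q=0$, $\|q\|^2=1$ at the simple root $(\lambda_i(x),q_i(x))$, as in \cite[Lemma 3.1]{ajbi16}.

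With $q_1,\dots,q_r$ and $\lambda_1,\dots,\lambda_r$ at hand, I would complete the frame \emph{measurably}: on all of $\R^d$ the remaining columns $q_{r+1},\dots,q_d$ and eigenvalues $\lambda_{r+1},\dots,\lambda_d$ are obtained from a measurable eigendecomposition of the complementary block, which is all that (i) demands. Property (ii) is then immediate from continuity of $y\mapsto(\lambda_1(y),\dots,\lambda_r(y))$: since at $x$ these values are distinct and strictly positive while $\lambda_{r+1}(x)=\cdots=\lambda_d(x)=0$, shrinking $N(x)$ preserves the strict ordering and the separation $\lambda_r(y)>\max_{i>r}\lambda_i(x)\vee 0=0$. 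For (iii), each $\lambda_i$ with $i\le r$ is bounded below by some $\delta>0$ on $N(x)$, so $\sqrt{\lambda_i}$ is $\mathcal{C}^{1,1}$ there; as $\bar{\sigma}=\sum_{i=1}^r\sqrt{\lambda_i}\,q_i e_i^\top$ is a finite sum of products of $\mathcal{C}^{1,1}$ factors, $\bar\sigma\in\mathcal{C}^{1,1}(N(x),\M^d)$.

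For the identity \eqref{eqsigmabarc}, the structural starting point is that the vanishing eigenvalues drop out of the reconstruction, so $\bar\sigma\bar\sigma^\top=\sum_{i=1}^r\lambda_i q_iq_i^\top=C$ on $N(x)$; in particular $\bar\sigma$ is a $\mathcal{C}^{1,1}$ square root of $C$ with $\Ker\bar\sigma(x)^\top=\Ker C(x)$. I would then fix $u\in\Ker C(x)$, differentiate $C=\bar\sigma\bar\sigma^\top$ column by column as $\partial_kC^j=(\partial_k\bar\sigma)\bar\sigma^\top e_j+\bar\sigma\,\partial_k(\bar\sigma^\top e_j)$, and contract $(DC^j)(CC^+)^j(x)=\sum_k(CC^+)^j_k\,\partial_kC^j$ against $u$. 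The $\bar\sigma\,\partial_k(\cdot)$ summand is annihilated by the pairing because $u^\top\bar\sigma(x)=0$, while in the remaining summand I would use that $CC^+$ is the orthogonal projection onto $\mathrm{Range}(C)=\mathrm{Range}(\bar\sigma)$, whence $\bar\sigma(x)^\top(CC^+)(x)=\bar\sigma(x)^\top$; a relabelling of summation indices then identifies $\langle u,\sum_jDC^j(CC^+)^j\rangle$ with $\langle u,\sum_jD\bar\sigma^j\bar\sigma^j\rangle$.

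The main obstacle is the spectral regularity at the simple eigenvalues, namely securing genuine $\mathcal{C}^{1,1}$ (not merely continuous) dependence of both $\lambda_i$ and $q_i$, and recognizing why this must be confined to the top $r$ directions: the zero eigenvalue of multiplicity $d-r$ may split into several branches carrying only measurable eigenvectors nearby, which is exactly why $\bar\sigma$ is truncated and why the completion is required to be only measurable. The identity itself is elementary once $C=\bar\sigma\bar\sigma^\top$ and $\bar\sigma^\top CC^+=\bar\sigma^\top$ are isolated; the only care needed there is the index bookkeeping.
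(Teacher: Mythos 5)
Your treatment of the spectral regularity is sound and follows the same route as the proof the paper defers to (\cite[Lemma 3.1]{ajbi16}): simplicity of the top $r$ eigenvalues plus the implicit function theorem (equivalently, the Riesz projections $P_i$ along contours) gives $\mathcal{C}^{1,1}$ branches $\lambda_i,q_i$ for $i\le r$; a measurable completion of the frame handles (i); and (ii)--(iii) follow as you describe.

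The gap is in your proof of \eqref{eqsigmabarc}. You assert that $\bar\sigma\bar\sigma^\top=\sum_{i\le r}\lambda_i q_iq_i^\top=C$ \emph{on all of} $N(x)$. This is false in general: it would force $C(y)$ to be positive semi-definite of rank at most $r$ for every $y\in N(x)$, which is not assumed --- the extension of $C$ off $\mathcal{D}$ need not be PSD, and the rank may increase away from $x$. For instance $C(y)=\mathrm{diag}(1,y_1)$ near $x=0$ gives $\bar\sigma\bar\sigma^\top\equiv\mathrm{diag}(1,0)\neq C$. The identity $\bar\sigma\bar\sigma^\top=C$ holds only \emph{at} the point $x$, so differentiating ``$C=\bar\sigma\bar\sigma^\top$'' column by column is not legitimate. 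The correct decomposition is $C=\bar\sigma\bar\sigma^\top+R$ with $R:=P^{\perp}CP^{\perp}$, where $P^{\perp}:=I_d-\sum_{i\le r}q_iq_i^\top$ is $\mathcal{C}^{1,1}$ on $N(x)$ and $R(x)=0$; since $C(x)P^{\perp}(x)=P^{\perp}(x)C(x)=0$, one gets $\partial_kR(x)=P^{\perp}(x)\,\partial_kC(x)\,P^{\perp}(x)$, and the residual contribution $\langle u,\sum_{j}DR^j(x)(CC^+)^j(x)\rangle$ vanishes because $(CC^+)(x)P^{\perp}(x)=0$. With this extra step, your contraction argument for the $\bar\sigma\bar\sigma^\top$ part (which is correct: the summand involving $u^\top\bar\sigma(x)=0$ drops out, and $CC^+\bar\sigma=\bar\sigma$ at $x$) does yield \eqref{eqsigmabarc}; without it, the argument rests on a false intermediate identity.
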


We will also need  the continuity of the infinitesimal generator  of \eqref{diffusionsdeinvariance} acting on smooth functions $\phi$ 
	\begin{align}\label{eq:generator}
	\Lc \phi := D\phi b+ \frac 12 \Tr(D^2\phi \sigma \sigma^\top) + \int \left(
	\phi(.+\rho(.,z))-\phi-D \phi \rho(.,z)\right)F(dz),
	\end{align}
	where $D \phi^\top$ (resp.~$D^2 \phi$) is the gradient (resp.~Hessian) of $\phi$.  In the sequel, we denote by $\Cc(\Dc)$ the space of  continuous functions on $\Dc$. {We add the superscript $p$ on $\Cc$ to denote functions with $p$-continuous derivatives for all $p \leq \infty$, and the subscript $c$ (resp.~0) stands for functions with compact support (resp. vanishing at infinity). This is the object of the following lemma  (a similar formulation  in the semimartingale set-up can be found  in  \cite[Lemma A.1]{sp10}).}\vs2

\begin{lemma}\label{lemmalcontinuity}
	Under  \eqref{conditionscontinuity} and \eqref{growthconditionsln},  $\Lc(\Cc^	2_c(\Dc)) \subset \Cc_0(\Dc)$.
\end{lemma}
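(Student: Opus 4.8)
The plan is to treat the three pieces of $\Lc\phi$ in \eqref{eq:generator} separately, prove that each is continuous, and then show that the whole expression vanishes at infinity. Throughout, fix $\phi \in \Cc^2_c$ with $\supp\phi \subset \{\|x\| \le R\}$, and abbreviate the jump part by $\Jc\phi(x):=\int(\phi(x+\rho(x,z))-\phi(x)-D\phi(x)\rho(x,z))F(dz)$. The drift term $D\phi\,b$ and the diffusion term $\tfrac12\Tr(D^2\phi\,C)$ are products of continuous functions, by the continuity of $b$ and $\sigma$ in \eqref{conditionscontinuity} (hence of $C=\sigma\sigma^\top$) together with the smoothness of $\phi$, so both are continuous. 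The first observation I would record is that taking $H=I_d$ in \eqref{conditionscontinuity} shows that $m(x):=\int\|\rho(x,z)\|^2F(dz)$ is finite and continuous; combined with the second order bound $|\phi(x+w)-\phi(x)-D\phi(x)w|\le\tfrac12\|D^2\phi\|_\infty\|w\|^2$ this already guarantees that $\Jc\phi$ is well defined and finite.

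The heart of the argument, and the main obstacle, is the continuity of $\Jc\phi$: since $\rho$ is only measurable in $x$, the integrand has no pointwise continuity, and one must exploit the \emph{integrated} continuity condition \eqref{conditionscontinuity}. Using Taylor's formula with integral remainder I would write
\begin{equation*}
\phi(x+\rho(x,z))-\phi(x)-D\phi(x)\rho(x,z)=\rho(x,z)^\top G(x,\rho(x,z))\rho(x,z),\qquad G(x,w):=\int_0^1(1-t)D^2\phi(x+tw)\,dt.
\end{equation*}
To compare $\Jc\phi(x)$ with $\Jc\phi(x_0)$ I would freeze the base point: the map $w\mapsto H_0(w):=G(x_0,w)$ lies in $\Cc_b(\R^d,\M^d)$ (it is continuous in $w$ and bounded by $\tfrac12\|D^2\phi\|_\infty$), so $I_{H_0}(x):=\int\rho(x,z)^\top H_0(\rho(x,z))\rho(x,z)F(dz)$ is continuous by \eqref{conditionscontinuity} and satisfies $I_{H_0}(x_0)=\Jc\phi(x_0)$. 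It then remains to control the error $E(x):=\int\rho(x,z)^\top\big(G(x,\rho(x,z))-G(x_0,\rho(x,z))\big)\rho(x,z)F(dz)$. Since $D^2\phi$ is uniformly continuous and $\|(x+tw)-(x_0+tw)\|=\|x-x_0\|$, the matrix $G(x,w)-G(x_0,w)$ is small \emph{uniformly in} $w$ once $\|x-x_0\|$ is small; hence $|E(x)|\le\sup_w\|G(x,w)-G(x_0,w)\|\,m(x)$, and $m$ is locally bounded, so $E(x)\to0$ as $x\to x_0$. This gives $\Jc\phi(x)\to\Jc\phi(x_0)$ and therefore the continuity of $\Lc\phi$.

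Finally, for the behaviour at infinity I would use that $\phi,D\phi,D^2\phi$ all vanish outside $\{\|x\|\le R\}$: for $\|x\|>R$ only the jump term survives and reduces to $\Lc\phi(x)=\int\phi(x+\rho(x,z))F(dz)$. Since $\phi(x+\rho(x,z))\neq0$ forces $x+\rho(x,z)\in\supp\phi$ and hence $\|\rho(x,z)\|\ge\|x\|-R=:r$, a Chebyshev-type estimate gives, for $\|x\|$ large enough that $r>1$,
\begin{equation*}
|\Lc\phi(x)|\le\|\phi\|_\infty\,F(\{\|\rho(x,z)\|\ge r\})\le\frac{\|\phi\|_\infty}{r^q\ln r}\int_{\{\|\rho(x,z)\|>1\}}\|\rho(x,z)\|^q\ln\|\rho(x,z)\|\,F(dz)\le\frac{\|\phi\|_\infty\,L(1+\|x\|^q)}{(\|x\|-R)^q\ln(\|x\|-R)}.
\end{equation*}
As $\|x\|\to\infty$ the right-hand side behaves like $1/\ln(\|x\|-R)\to0$, which is precisely where the logarithmic factor in \eqref{growthconditionsln} is used: with only \eqref{growthconditions} (that is, $q=2$ and no logarithm) the same bound would remain of order one, yielding boundedness but not decay, as already noted after the statement. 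Combining this with the continuity established above shows $\Lc\phi\in\Cc_0(\Dc)$.
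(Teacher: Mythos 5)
Your proof is correct and takes essentially the same route as the paper's: the same Taylor-remainder decomposition with the base point frozen so that \eqref{conditionscontinuity} gives continuity of the jump term (the error being handled by uniform continuity of $D^2\phi$), and the same Chebyshev-type estimate via \eqref{growthconditionsln} to obtain decay at infinity, with the logarithmic factor playing exactly the role you identify.
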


\begin{proof}
	Let $\phi \in \Cc^2_c(\Dc)$. We extend it to $ \Cc^2_c(\R^d)$. Let $M >0$ be such that $\phi(x)=0$ if $\|x\| > M$ {and fix} $\|x\| > M+1$. Then 
	\begin{align*}
	\Lc \phi(x)= \int \phi(x+\rho(x,z))F(dz)   = \int_{\{\|x+\rho(x,z)\| \leq M\}} \phi(x+\rho(x,z))F(dz). 
	\end{align*}
	On $\{\|x+\rho(x,z)\| \leq M\}$, $1 +M <\|x\| \leq M + \|\rho(x,z)\|$. Hence, \eqref{growthconditionsln} yields
	
	\begin{align*}
	|\Lc \phi(x)| &\leq \| \phi\|_{\infty} \int_{\{\|x+\rho(x,z) \| \leq M\}}    \frac{\|\rho(x,z)\|^q \ln \|\rho(x,z)\|}{(\|x\|-M)^q \ln(\|x\|-M)} F(dz)\\
	&\leq  \|\phi\|_{\infty} L \frac{(1+\|x\|^q)}{(\|x\|-M)^q } \frac 1 {\ln(\|x\|-M)}, 
	\end{align*}
	where $\|.\|_{\infty}$ is the uniform norm, which shows that $\Lc \phi(x) \rightarrow 0 $ when $\|x\| \rightarrow \infty.$
	Moreover, denoting by $\Phi:=\int (\phi(.+\rho(.,z)) - \phi - D\phi \rho(.,z))F(dz)$, we have for all $x,y \in \Dc$
	\begin{align*}
	\Phi(y) &=\int\int_0^1 \int_0^t \rho(y,z)^\top  D^2\phi(y+s\rho(y,z)) \rho(y,z) ds dt F(dz)\\
	&= \int\int_0^1 \int_0^t \rho(y,z)^\top  D^2\phi(x+s\rho(y,z)) \rho(y,z) ds dt F(dz) \\
	&\;\;\;\; + \int\int_0^1 \int_0^t \rho(y,z)^\top  \left(D^2\phi(y+s\rho(y,z))-  D^2\phi(x+s\rho(y,z)) \right) \rho(y,z) ds dt F(dz)\\
	&=:I_1(x,y) + I_2(x,y).
	\end{align*}

{Observe that}	$I_2(x,y)\to 0$ when $y \to x$, since $D^2 \phi$ is uniformly continuous (recall that $\phi$ has compact support). In addition, it follows from \eqref{conditionscontinuity} that $I_1(x,y) \to \Phi(x)$ when $y \to x$, which ends the proof.

\end{proof}

We can now move to the proof of Theorem \ref{theoremjumps}.

\begin{proof}[Proof of Theorem \eqref{theoremjumps}] 
	{{\bf Part a.} We first prove that our conditions are necessary.}  
	Let $X$ denote a  {weak solution} starting at $X_0=x$ such that  $X_t \in \mathcal{D}$ for all $t \geq 0 $. If $x\notin \partial \Dc$, then  $\mathcal{N}_{\mathcal{D}}(x)=\{0\}$ and there is nothing to prove. We therefore assume from now on that $x\in {\partial\Dc}$. Let $0<\eta <1$. Throughout the proof, we fix  $\psi_{\eta}$ a bounded continuous function on $\R^d$  such that $\psi_{\eta}=0$ on $B_{\eta}(x)$ and $\psi_{\eta}\rightarrow\mathds{1}_{\{ \R^d  \setminus  \{0\} \}}$ for $\eta \downarrow 0$, where $B_{\eta}(x)$ is the open ball with center $x$ and radius $\eta$. 
	
	Step 1.  
	We start by proving \eqref{ourcondsupp}. Let  $\epsilon >0$ and $\phi_{\epsilon}: \R^d \mapsto [0,1]$ be $\Cc^2$ such that $\phi_{\epsilon}=0$ on $\Dc \cup B_{\epsilon}(x)$ and $\phi_{\epsilon}=1$ on $\left(\Dc \cup B_{2\epsilon}(x)\right)^c$. $\mathcal{D}$ is stochastically invariant, hence $\phi_{\epsilon}(X_t) =0$,   for all $t \geq 0$. Since $\phi_{\epsilon}$ is twice differentiable and bounded, Itô's formula \cite[Theorem I.4.57]{jas} yields
	\begin{equation*}
	\int_0^t\Lc\phi_{\epsilon}(X_s) +  \int_0^t D\phi_{\epsilon} (X_{s}) \sigma (X_s) dW_s + \left(\phi_{\epsilon}(X_{s-}+\rho(X_{s-},.))-\phi_{\epsilon}(X_{s-}) \right)\ast (\mu - \nu)=0,
	\end{equation*} 
	where $\ast$ denotes the standard notation for stochastic integration with respect to a random measure (see \cite{jas}) and $\nu(dt,dz):=dt F(dz)$. By continuity of $\Lc \phi$ (see Lemma \ref{lemmalcontinuity}), taking the expectation,  dividing by $t$ and letting $t \rightarrow 0$ yield
	\begin{equation}\label{eq:maximumprinciplephiepsilon}
	\Lc \phi_{\epsilon} (x)=0.
	\end{equation}

A change of probability measure with respect to the Doléans-Dade exponential $Z:=\Ec(\psi_{\eta} \ast (\mu - \nu))$, { which is uniformly integrable} {(see \cite[Theorem IV.3]{lep} and the proof of  \cite[ Proposition 2.13]{vee}),} yields
	\begin{equation}\label{eq:itophiepsilon}
	\int_{0}^{t} \widetilde\Lc\phi_{\epsilon}(X_s)ds +  \int_0^t D\phi_{\epsilon} (X_{s}) \sigma (X_s) dW_s + \left(\phi_{\epsilon}(X_{s-}+\rho(X_{s-},.))-\phi_{\epsilon}(X_{s-}) \right)\ast (\mu - \widetilde\nu)=0, 
	\end{equation}
	where 
	\begin{align*}
	&\widetilde b  := b + \int \psi_{\eta}(z) \rho(.,z) F(dz), \quad \widetilde\nu(dt,dz) := dt \widetilde F(dz), \quad \widetilde F(dz) := (1+\psi_{\eta}(z)) F(dz), \\
	&\widetilde\Lc\phi := D \phi \widetilde b + \frac 12 \Tr(D^2\phi C) + \int \left( \phi(.+\rho(.,z)) - \phi - D\phi \rho(.,z) \right) \widetilde F(dz).
	\end{align*}
	
	{By} combining the above with \eqref{eq:generator}, taking the expectation in \eqref{eq:itophiepsilon}, dividing by $t$ and sending $t \rightarrow 0$, {and} invoking once again Lemma \ref{lemmalcontinuity}{,} we get
	\begin{equation*}
	\Lc\phi_{\epsilon}(x) +   \int \phi_{\epsilon}(x+\rho(x,z)) \psi_{\eta}(z) F(dz)=0.
	\end{equation*}
It {then} follows from \eqref{eq:maximumprinciplephiepsilon} that $\int \phi_{\epsilon}(x+\rho(x,z)) \psi_{\eta}(z) F(dz)=0$ for all $\eta \in (0,1)$. Sending $\eta \downarrow 0$ {leads to} $\int \phi_{\epsilon}(x+\rho(x,z)) F(dz)=0$, by monotone convergence (recall that $\phi_{\epsilon}\ge 0$). Hence $$\int \mathds{1}_{\{x + \rho(x+z) \in (\Dc \cup B_{2\epsilon}(x))^c \}} F(dz)=0.$$ {For} $\epsilon \downarrow 0$, \eqref{ourcondsupp} follows from monotone convergence {again}. \vs2

	Step 2. By the proof of \cite[Proposition 3.5]{ajbi16}, it suffices to consider the case where the positive eigenvalues of the covariance matrix $C$ at the fixed point $x \in \Dc$ are all distinct as in Lemma \ref{lemmadistincteigenmatrix}. We can also  restrict the study to $\sigma=C^{\frac 12}$ (see \cite[Remark 2.1]{ajbi16}). We therefore use the notations of Lemma \ref{lemmadistincteigenmatrix}. We proceed as in Step 2 of the proof of \cite[Lemma 3.2]{ajbi16} for the continuous part combined with the proof of \cite[Proposition 2.13]{vee} for the jump part. Fix $u \in \Nc_{\Dc}(x)$ and let  $\phi$ be a smooth function (with compact support in $N(x)$) such that {$\displaystyle\max_{ \mathcal{D}}\phi=\phi(x)$} and $\D\phi(x)=u^\top$.\footnote{Such a function always exists (up to considering an element of the proximal normal cone), see  the discussion preceding \cite[Lemma 3.2]{ajbi16} and Step 1 of the proof of the same Lemma.} Since $\mathcal{D}$ is stochastically invariant, $\phi(X_t) \leq \phi(x)$,   for all $t \geq 0$. Let $w_{\eta}:=(\eta-1)\psi_{\eta}$. By reapplying Step 1, with the test function $\phi$ (resp.~$w_{\eta}$) instead of $\phi_{\epsilon}$ (resp.~$\psi_{\eta}$),   we obtain	\begin{align*}
	0 &\geq \int_{0}^{t} \widetilde\Lc\phi(X_s)ds +  \int_0^t D\phi (X_{s}) \sigma (X_s) dW_s + \widetilde N_t \nonumber \\
	&=  \int_{0}^{t} \widetilde\Lc\phi(X_s)ds +  \int_{0}^{t} (D\phi Q \Lambda^{\frac12}Q^\top)(X_{s})dW_s  + \widetilde N_t{,}
	\end{align*}
	where {$ \widetilde N_s:=\left(\phi(X_{s-}+\rho(X_{s-},.))-\phi(X_{s-}) \right)\ast (\mu - \widetilde\nu)$}  is the pure-jump true martingale part under the new measure ({since $\phi$ is Lipschitz and \eqref{growthconditions} holds}). Let us define the Brownian motion $B=\int_{0}^{\cdot} Q(X_{s})^\top dW_s$, recall that $Q$ is orthogonal, together with $\bar{B}=(B^1,..,B^r,0,...,0)^\top $ and $\bar{B}^{\perp}=(0,...,0,B^{r+1},...,B^d)$. Since $Q {\bar{\Lambda}}^{\frac12}=\bar{Q} {\bar{\Lambda}}^{\frac12}$, the above inequality can be written in the form 
	\begin{align*} 
	0\ge & \int_{0}^{t } \widetilde\Lc\phi(X_{s})ds + \int_{0}^{t} D\phi(X_{s}) \bar \sigma(X_{s})d\bar B_s + \int_{0}^{t} (D\phi Q \Lambda^{\frac12})(X_{s})d\bar B^{\perp}_s + \widetilde N_t.
	\end{align*}

	Let $(\mathcal{F}_s^{\bar{B}})_{s\ge 0}$ be the completed filtration generated by $\bar B$. Since $\bar B,\bar B^{\perp}$ are independent and $\bar B$ has independent increments, conditioning by $\Fc^{\bar B}_t$ yields, by  Lemma \ref{lemmacondexpectationfiltration} in the appendix,
	\begin{align*}
	0\ge & \int_{0}^{t } \mathbb{E}_{\mathcal{F}_s^{\bar{B}}} [\widetilde \Lc \phi(X_{s})]ds + \int_{0}^{t} \mathbb{E}_{\mathcal{F}_s^{\bar{B}}} [D \phi(X_{s}) \bar \sigma(X_{s})]d\bar B_s .
	\end{align*}

	We now apply Lemma \ref{lemmakomlos} of the Appendix to $(D  \phi \bar{\sigma})(X)$ and reapply the same conditioning argument to find a bounded adapted process $\widetilde \eta$ such that 	
	\begin{equation}\label{eq: for applying small time behavior}
	0\ge \int_{0}^{t} \theta_sds + \int_{0}^{t} \left( \alpha + \int_{0}^{s} \beta_rdr +  \int_{0}^{s} \gamma_r dB_r \right)^\top  dB_s,
	\end{equation}
	where 
	\begin{eqnarray*}
		\theta:= \mathbb{E}_{\mathcal{F}_{\cdot}^{\bar{B}}}\left[\widetilde \Lc \phi(X_{\cdot})\right]&,&
		\alpha^{\top}:=  ( D\phi \bar{\sigma})(x) = D\phi (x) Q(x){\Lambda(x)}^{\frac12} \\
		\beta := \mathbb{E}_{\mathcal{F}_\cdot^{\bar{B}}}\left[ \widetilde \eta_{\cdot} \right]&,&
		\gamma :=  \mathbb{E}_{\mathcal{F}_\cdot^{\bar{B}}}\left[D (D \phi  \bar{\sigma})\bar{\sigma}(X_{\cdot})\right].  
	\end{eqnarray*}

	{Step 3.} We now check that we can apply   {Lemma \ref{lemmadoubleintsto}} below.  First note that all the above processes are bounded. This follows from Lemmas \ref{lemmadistincteigenmatrix} and \ref{lemmalcontinuity}, \eqref{growthconditions} and the fact that $\phi$ has compact support. 
	In addition, given $T >0$, the independence of the increments of  $\bar{B}$ implies that $\theta_s= \bbE_{\calF^{\bar{B}}_T}\left[ \widetilde \Lc \phi (X_s) \right]$ for all $s \leq T$.  {From  Lemma \ref{lemmalcontinuity} and since $X$ has almost surely no jumps at $0$, it follows that $\theta$ is a.s.~continuous at $0$.}  {Moreover, since $ D\phi\bar{\sigma}$ is $\mathcal{C}^{1,1}$, $D ( D\phi  \bar{\sigma})\bar{\sigma}$ is Lipschitz  which,  combined with \eqref{eq:momentestimates2}, implies  \eqref{eqgammacontinuity}.}

	{Step 4.} In view of Step 3, we can apply  Lemma \ref{lemmadoubleintsto} to \eqref{eq: for applying small time behavior} to deduce  that $\alpha=0$ and 
	$\theta_0-\frac{1}{2}\Tr(\gamma_0) \leq 0$. The first equation implies that $\alpha^{\top} {\Lambda(x)}^{\frac12} Q^{\top}(x)=u^\top  C(x)=0$, or equivalently \eqref{ourcondcov} since $C(x)$ is symmetric. The second identity combined with $D\phi(x)=u^{\top}$ shows that 
	\begin{align*}
	0&\ge  \widetilde \Lc\phi(x)-\frac12 \Tr\left[ {  \bar \sigma^{\top} D^{2}  \phi \bar \sigma} +( {I_d\otimes u^\top})D \bar{\sigma}\bar{\sigma}\right](x) \\
	 &=  \Lc\phi(x)-\frac12 \Tr\left[ {  \bar \sigma^{\top} D^{2}  \phi \bar \sigma} +( {I_d\otimes u^\top})D \bar{\sigma}\bar{\sigma}\right](x) + (\eta-1)\int \left(\phi(x+\rho(x,z)) - \phi(x)\right) \psi_{\eta}(z) F(dz),
	\end{align*}
	in which $\otimes$ stands for the Kronecker product (see \cite[Definition A.4 and Proposition A.5]{ajbi16}) and $D\bar \sigma$ is the Jacobian matrix of $\bar\sigma$ (see \cite[Definition A.7]{ajbi16}).
	Sending $\eta \downarrow 0$, by monotone convergence, we get 
	\begin{align}\label{eq:equalitytemp}
	0&\ge  \Lc\phi(x)-\frac12 \Tr\left[ {  \bar \sigma^{\top} D^{2}  \phi \bar \sigma} +( {I_d\otimes u^\top})D \bar{\sigma}\bar{\sigma}\right](x) + \int \left(\phi(x)- \phi(x+\rho(x+z))\right) F(dz).
	\end{align}
	
	In particular, since $\phi(x)=\displaystyle \max_{\Dc}\phi $,   \eqref{ourcondsupp} implies that $\int |\phi(x+\rho(x+z)) - \phi(x)|F(dz)=\int (\phi(x) - \phi(x+\rho(x+z)))F(dz) < \infty$. Moreover, the right hand side is equal to 
	\begin{align*}
 -\int D\phi(x)\rho(x,z)F(dz) - \int \int_0^1 \int_0^t \rho(x,z)^\top D^2 \phi(x+s\rho(x,z))\rho(x,z)dsdtF(dz), 
	\end{align*}
	yielding \eqref{ourcondjump} (recall \eqref{growthconditions} and that $\phi$ has compact support). Combining \eqref{eq:equalitytemp},  \eqref{eqsigmabarc}-\eqref{eq:generator} and 
	\begin{align*}
 \Tr\left[(I_d \otimes u^\top)D\bar \sigma \bar \sigma\right](x)= \langle u, \sum_{j=1}^d D\bar \sigma^j(x) \bar \sigma^j(x)  \rangle,
\end{align*}
we finally obtain \eqref{ourconddrift}.\vs2

	{{\bf Part b.} We   now prove that our conditions are sufficient.} It follows from $\eqref{ourcondcov}$ and the proof of \cite[Proposition 4.1]{ajbi16} that  
	\begin{equation*}\label{eq:inequalitytracedrift}
	\Tr(D^2 \phi(x) C(x)) \leq - \langle D \phi(x)^\top, \sum_{j=1}^d DC^j(x)(CC^+)^j(x)\rangle,
	\end{equation*}
	for any smooth function $\phi$ such that $\displaystyle\max_{\Dc} \phi = \phi(x) \geq 0$. Moreover, after noticing that $D\phi(x)^\top \in \Nc_{\Dc}(x)$ (this is immediate from the Taylor expansion of $\phi$ around $x$),  \eqref{ourcondjump} yields
	\begin{align*}
	\int \left(\phi(x+\rho(x,z))-\phi(x)+D\phi(x)\rho(x,z) \right)F(dz) = &\int \left(\phi(x+\rho(x,z))-\phi(x)\right) F(dz) \\
	&+ \int D\phi(x)\rho(x,z) F(dz).
	\end{align*}
	
	 In addition, it follows from \eqref{ourcondsupp}  that  $\phi(x+\rho(x,z)) \leq \phi(x)$ for $F$-almost all $z$. Combining all the above with \eqref{ourconddrift}  we finally get
	\begin{equation*}
	\Lc \phi(x)  \leq	\langle  D \phi(x)^\top, b(x) -\int \rho(x,z) F(dz) -\frac{1}{2} \sum_{j=1}^{d} D C^j(x)(CC^+)^j(x)    \rangle \leq 0.
	\end{equation*}
	Therefore, $\Lc$ satisfies the positive maximum principle.  {In addition, since $\Lc: \Cc_c^{\infty}(\Dc) \mapsto \Cc_0(\Dc)$ (see Lemma \ref{lemmalcontinuity}) and $\Cc_c^{\infty}(\Dc)$ is dense in $\Cc_0(\Dc)$}, by \cite[Theorem 4.5.4]{eth}, there exists a càdlàg $(\Dc \cup \Delta)$-valued solution to the martingale problem for $\Lc$, where $\Delta$ denotes the one point compactification of $\Dc$. $\Delta$ is attained either by jump (killed by a potential) or by explosion. By the discussion preceding \cite[Proposition 3.2]{CFY05}, the process cannot jump to $\Delta$. Moreover, the growth conditions \eqref{growthconditions} ensure that no explosion happens in finite time (see \eqref{eq:momentestimates2}). Hence $\Delta$ is never attained.  {We conclude {by} using \cite[Theorem 2.3]{kur11}.}
\end{proof}

{\section{Equivalent fomulation in the semimartingale framework}\label{SectionMainSemimartingale}}

{In this section, we provide  an equivalent formulation of Theorem \ref{theoremjumps} in the semimartingale set-up which is more adapted to the construction of affine and polynomial jump-diffusions (see Remark \ref{rem:affinejumps} below). We stress once more that, by \cite{elk77,cinja}, \eqref{diffusionsdeinvariance} is a very general formulation, equivalent to  the semimartingale formulation \eqref{eq:canonicaldecomposition} below (see also \cite[Theorem 2.1.2]{JP11}).}\vs2  

Let $X$ denote {a homogeneous diffusion with jumps}  in the sense of  \cite[Definition III.2.18]{jas} on a filtered probability space $(\widetilde \Omega, \widetilde \Fc, \widetilde \F, \widetilde \P)$, i.e.~its semimartingale characteristics $(\widetilde B,\widetilde C,\nu)$ are of the form 
\begin{align}\label{eq:semimartingale char}
\widetilde B_t = \int_0^t \widetilde b(X_s)ds, \quad  \widetilde C_t = \int_0^t \widetilde c(X_s)ds, \quad  \nu(dt,dz) = dt K (X_t, dz),
\end{align}
with respect to a continuous truncation function $h$, {i.e.~$h$ is bounded  and equal to the identity on a neighborhood of $0$}. Here, $\widetilde b : \R^d \mapsto \R^d$, $\widetilde c: \R^d \mapsto \bbS^d_+$, $K$ is a  transition kernel  from $\bbR^d$ into $\bbR^d \setminus\{0\}$ and
\begin{align}
\widetilde b, \widetilde c \mbox{ and } { \int f(z) \|z\|^2 K(.,dz)} \mbox{ are continuous for any bounded continuous function } f  \label{conditionscontinuitytilde} \tag{$\widetilde H_{\Cc}$}.
\end{align}

 The triplet $(\widetilde b, \widetilde c,K)$ is called the differential characteristics of $X$. In addition we assume that there exist $\widetilde q,  \widetilde L>0$ such that 
	\begin{align}
	\int_{\{\|z\| > 1\}}  \|z\|^{\widetilde q} \ln \|z\|  K(x,dz) &\leq  \widetilde L (1+ \|x\|^{\widetilde q}), \label{growthconditionslntilde} \tag{$\widetilde H_0$} \\
\|\widetilde b(x)\|^2 + \| \widetilde c(x)\| +   \int\|z\|^2 K(x,dz) &\leq \widetilde L (1+ \|x\|^2),  \label{growthconditionstilde} \tag{$\widetilde H_1$} 
\end{align}
for all $x \in \R^d$.  {It follows that $X$ is a locally square-integrable semimartingale (see \cite[Definition II.2.27 and Proposition II.2.29]{jas}) and in particular $X$ is a special semimartingale.}  Recall that $\nu$ is the compensated measure of the random jump measure $\mu$ of $X$. By  \cite[Theorem II.2.38]{jas},   $X$ admits the following canonical decomposition
\begin{equation}\label{eq:canonicaldecomposition}
X=X_0 +  B + X^c +  z \ast (\mu- \nu), 
\end{equation}
where $X^c$ is a continuous local martingale with quadratic variation $\langle X^c \rangle_\cdot = \int_0^\cdot  \widetilde c(X_s)ds$ and $B:=\int_0^{\cdot} b(X_s)ds$, where $b:=\widetilde b + \int (z - h(z))K(.,dz)$. Finally, we assume that  
\begin{equation} \label{eq: restriction C}
\mbox{the restriction of } \widetilde c \mbox{ to } \Dc \mbox{ can be extended to a } C^{1,1}_{loc}(\bbR^d,\bbS^d) \mbox{ function},\tag{$\widetilde H_2$}
\end{equation}
and we denote by $C$ this extended function. \vs2

We are now ready to state an equivalent formulation of Theorem \ref{theoremjumps} adapted to \eqref{eq:canonicaldecomposition}. {We start by defining naturally the notion of stochastic invariance with respect to a semimartingale.}  \vs2

 {\begin{definition}[\textit{Stochastic invariance}]\label{def: stock inv 1} A closed subset $\mathcal{D} \subset \mathbb{R}^d$ is said to be \textit{stochastically invariant} with respect to the semimartingale \eqref{eq:semimartingale char} if, for all $x \in \mathcal{D}$, there exists a filtered probability space $(\Omega, \Fc, \F:=(\Fc_t)_{t\geq 0}, \P)$ supporting a semimartingale $X$  with  characteristics  \eqref{eq:semimartingale char} starting at $X_0=x$ and such that $X_t \in \mathcal{D}$ for all $t\geq0$, $\P$-almost surely. 
 \end{definition}}

\begin{theorem}\label{theoremsemi}
Let $\Dc \subset \R^d$ be closed. 	Under the continuity assumptions \eqref{conditionscontinuitytilde} and \eqref{growthconditionslntilde}-\eqref{eq: restriction C}, the set $\Dc$ is \textit{stochastically invariant} with respect to the  semimartingale \eqref{eq:semimartingale char} if and only if 
	\begin{subnumcases}{}
	\supp K(x,dz) \subset \Dc-x, \label{ourcondsupp2}\\
	\int |\langle u, z \rangle | K(x,dz) < \infty, \label{ourcondjump2}\\
	C(x) u =0, \label{ourcondcov2}\\
	\langle  u,   {b(x) -\int z K(x,dz)} -\frac{1}{2} \sum_{j=1}^{d} D C^j(x)(CC^+)^j(x)    \rangle \leq 0, \label{ourconddrift2}
	\end{subnumcases}
	for all $x \in \mathcal{D}$ and $u \in \mathcal{N}_{\mathcal{D}}(x)$.
\end{theorem}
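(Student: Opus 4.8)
The plan is to prove Theorem \ref{theoremsemi} by reducing it to Theorem \ref{theoremjumps} via the representation theorem for semimartingales with characteristics of the form \eqref{eq:semimartingale char}. The key observation is that, as announced in the text, a semimartingale with the stated differential characteristics $(\widetilde b, \widetilde c, K)$ can be realized as a weak solution to the SDE \eqref{diffusionsdeinvariance} for a suitable choice of coefficients $(b,\sigma,\rho)$ and compensator $F(dz)$, by \cite[Theorem 2.1.2]{JP11} (equivalently \cite{elk77,cinja}). Concretely, I would first fix the square-root coefficient by setting $\sigma := \widetilde c^{1/2}$ on $\Dc$ (so that $\sigma\sigma^\top = \widetilde c =: C$ on $\Dc$, matching Assumption \eqref{eq: restriction C} with \eqref{eq: extension C}), and then construct a measurable jump coefficient $\rho$ together with a fixed reference measure $F(dz)$ on $\R^d$ such that the pushforward of $F$ under $z \mapsto \rho(x,z)$ equals the state-dependent jump kernel $K(x,dz)$ for every $x$. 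A standard way to do this is to take $F$ to be a suitable $\sigma$-finite measure (for instance Lebesgue measure on $\R^d$ or a fixed Lévy-type measure) and define $\rho(x,\cdot)$ as a measurable disintegration/transport map realizing $K(x,\cdot)$; the measurable selection is available because $K$ is a transition kernel and the continuity assumption \eqref{conditionscontinuitytilde} transfers to \eqref{conditionscontinuity} under this identification.

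Once this correspondence $(\widetilde b, \widetilde c, K) \leftrightarrow (b,\sigma,\rho,F)$ is set up, I would verify that the regularity and growth hypotheses \eqref{conditionscontinuitytilde}, \eqref{growthconditionslntilde}, \eqref{growthconditionstilde} on the semimartingale side translate exactly into \eqref{conditionscontinuity}, \eqref{growthconditionsln}, \eqref{growthconditions} on the SDE side. Here the crucial point is the bookkeeping of truncation functions: the drift in \eqref{eq:canonicaldecomposition} is $b = \widetilde b + \int (z - h(z)) K(\cdot,dz)$, which is precisely the drift that appears in the SDE \eqref{diffusionsdeinvariance} once the big jumps are compensated, because \eqref{diffusionsdeinvariance} compensates all of $\rho$ rather than only its truncated part. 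Under this identification, the small-jump integrability $\int \|z\|^2 K(x,dz) \leq \widetilde L(1+\|x\|^2)$ becomes $\int \|\rho(x,z)\|^2 F(dz) \leq L(1+\|x\|^2)$, and likewise \eqref{growthconditionslntilde} becomes \eqref{growthconditionsln}, so that all hypotheses of Theorem \ref{theoremjumps} hold. I would then check that the definitions of stochastic invariance in Definitions \ref{def: stock inv} and \ref{def: stock inv 1} coincide under this correspondence, which is immediate since both require the existence, for each starting point $x \in \Dc$, of a $\Dc$-valued process with the prescribed law/characteristics.

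Finally I would match the four geometric conditions term by term. Condition \eqref{ourcondsupp} reads $x + \rho(x,z) \in \Dc$ for $F$-almost all $z$, which is exactly $\supp K(x,dz) \subset \Dc - x$ in \eqref{ourcondsupp2} after pushing forward $F$ by $\rho(x,\cdot)$; condition \eqref{ourcondjump}, namely $\int |\langle u, \rho(x,z)\rangle| F(dz) < \infty$, becomes $\int |\langle u, z\rangle| K(x,dz) < \infty$, i.e.~\eqref{ourcondjump2}; condition \eqref{ourcondcov} on the covariance is identical to \eqref{ourcondcov2} since $C = \widetilde c$ on $\Dc$; and the drift condition \eqref{ourconddrift} becomes \eqref{ourconddrift2} because $\int \rho(x,z) F(dz) = \int z\, K(x,dz)$ and the curvature correction $\tfrac12 \sum_j DC^j(x)(CC^+)^j(x)$ depends only on $C$, which is unchanged. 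Applying Theorem \ref{theoremjumps} to the constructed SDE then yields the stated equivalence.

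I expect the main obstacle to be the measurable construction of the jump coefficient $\rho$ and the reference measure $F$ realizing the state-dependent kernel $K(x,\cdot)$ as a pushforward, together with the careful verification that the continuity assumption \eqref{conditionscontinuity} on $\int \rho(\cdot,z)^\top H(\rho(\cdot,z))\rho(\cdot,z) F(dz)$ genuinely follows from \eqref{conditionscontinuitytilde} on $\int f(z)\|z\|^2 K(\cdot,dz)$ under this identification; the term-by-term matching of the geometric conditions is then essentially a translation exercise once the representation is in place.
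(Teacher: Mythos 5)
Your proposal follows essentially the same route as the paper: fix a $\sigma$-finite atomless reference measure $F$, realize $K(x,\cdot)$ as the pushforward of $F$ under a measurable $\rho(x,\cdot)$ (the paper cites \cite{cinja} and \cite{elk77} for precisely this), set $\sigma:=\widetilde c^{1/2}$ and $b=\widetilde b+\int(z-h(z))K(\cdot,dz)$, translate the hypotheses and the four geometric conditions term by term via $\int g(\rho(x,z))F(dz)=\int g(z)K(x,dz)$, and invoke Theorem \ref{theoremjumps}. The only step you compress is the necessity direction, which you call ``immediate'' but which the paper spells out in its Part c by applying \cite[Theorem 2.1.2]{JP11} on a filtered extension to turn a given $\Dc$-valued semimartingale into a $\Dc$-valued weak solution of \eqref{diffusionsdeinvariance} with these particular coefficients (identifying the abstract jump coefficient $\delta$ with $\rho(X_{\cdot},\cdot)$, taking $\widetilde\sigma=\widetilde c^{1/2}$, and checking that invariance survives the extension) --- still, you correctly name that representation theorem as the tool, so this is the same proof in outline.
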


\begin{proof}	
	{Our proof is based on a (standard) representation of \eqref{eq:canonicaldecomposition} in terms of \eqref{diffusionsdeinvariance}. In this proof, we show the correspondence between the characteristics of \eqref{eq:semimartingale char} and the coefficients of \eqref{diffusionsdeinvariance}, and between the assumptions and invariance conditions of the two settings. Then, Theorem \ref{theoremsemi} is deduced from a direct application of Theorem \ref{theoremjumps}.}
	\\
	{{\bf Part a.} More precisely, let us  fix $F$  a $\sigma$-finite and infinite measure with no atom. By  \cite[Lemma 3.4]{cinja} and the discussion preceding \cite[Theorem 3.13]{cinja}, there exists a measurable function $\rho : \R^d \times \R^d \to \R^d \backslash \{0\}$   such that\footnote{There is a lot of freedom for $\rho$, see \cite[Section 4]{cinja} and \cite[Theorem 6 and Corollary 7]{elk77}.}
	\begin{align}
	K(x,B)= \int \mathds{1}_{B}(\rho(x,z)) F(dz), \quad \mbox{ for all Borel sets } B.  \label{eq:identityKFrho}
	\end{align} 
	Let us fix $\rho$ for the rest of the proof, and recall that $b= \widetilde b + \int (z - h(z))K(.,dz)$ and set $ \sigma := {\widetilde c}^{\frac 1 2 }$. We claim that  the assumptions \eqref{conditionscontinuitytilde} and \eqref{growthconditionslntilde}-\eqref{eq: restriction C} imply the assumptions \eqref{conditionscontinuity} and   \eqref{growthconditionsln}-\eqref{eq: extension C}, and  that the conditions  \eqref{ourcondsupp}-\eqref{ourconddrift} are equivalent to the conditions \eqref{ourcondsupp2}-\eqref{ourconddrift2}.
	To see this, recall that  $h$ is bounded  and equal to the identity on a neighborhood of $0$. Hence $z \to (z-h(z))/(\|z\|^2 \mathds{1}_{ \{\|z\| \neq 0\} })$ is bounded and continuous (because it is equal to $0$ on a neighborhood of $0$). It follows, from \eqref{conditionscontinuitytilde} that $b$ and $\sigma$ are continuous. Therefore, combining   the above with  \eqref{eq:identityKFrho} and \eqref{growthconditionslntilde}-\eqref{eq: restriction C} yields \eqref{conditionscontinuity} and \eqref{growthconditionsln}-\eqref{eq: extension C}. Finally,  one easily deduce from \eqref{eq:identityKFrho} that \eqref{ourcondsupp}-\eqref{ourconddrift} are equivalent to \eqref{ourcondsupp2}-\eqref{ourconddrift2} since $\int g(z) K(x,dz)=\int  g(\rho(x,z)) F(dz)$, for any measurable function $g$.\\
	{\bf Part b.} To see that the conditions \eqref{ourcondsupp2}-\eqref{ourconddrift2} of Theorem \ref{theoremsemi} are sufficient, it suffices to apply  Theorem \ref{theoremjumps}, whose assumptions and conditions are satisfied by Part a.~above. Namely,  under the conditions \eqref{ourcondsupp2}-\eqref{ourconddrift2},  \eqref{diffusionsdeinvariance} admits a $\Dc$-valued weak solution, which is also a semimartingale with characteristics \eqref{eq:semimartingale char}. }

{
	{\bf Part c.} We  now prove that the conditions \eqref{ourcondsupp2}-\eqref{ourconddrift2} are necessary. Assume that $\Dc$ is stochastically invariant with respect to \eqref{eq:semimartingale char}. Fix $(\Omega, \Fc, \F:=(\Fc_t)_{t\geq 0}, \P)$ supporting a semimartingale $X$  with  characteristics  \eqref{eq:semimartingale char} starting at $X_0=x \in \Dc$ such that $\P(\{X_t  \in \mathcal{D}, \forall t \geq 0\} )=1$. By \cite[Theorem 2.1.2]{JP11}, there exists a filtered  extension  $(\widetilde \Omega, \widetilde \Fc, \widetilde \F:=(\widetilde \Fc_t)_{t\geq 0}, \widetilde\P)$ supporting a $d$-dimensional Brownian motion $W$ and a Poisson random measure $\mu$  with compensator $dt \otimes F(dz)$ such that $X$ solves 
	\begin{align}\label{eq:diffusiondelta}
	X_t = x + \int_0^t \widetilde b(X_s) ds + \int_0^t \widetilde \sigma_s dW_s  + ( \delta \mathds{1}_{\{ \|\delta\|  \leq 1\}}) \ast (\mu - dt F(dz)) + (\delta \mathds{1}_{\{ \|\delta\| > 1\}}) \ast \mu,
	\end{align}
	where $(\widetilde \sigma,\delta)$ are such that $\widetilde \sigma_t(\widetilde \omega)\widetilde \sigma_t(\widetilde \omega)^{\top} = \widetilde c(X_t(\widetilde \omega))$  and $K(X_t(\widetilde \omega),B ) = \int  \mathds{1}_{ B}(\delta(\widetilde  \omega,t,z)) F(dz)$,  for all Borel sets $B$, for all $t \geq 0 $, for $\widetilde \P$-almost all $\widetilde \omega$. In view of \eqref{eq:identityKFrho}, 
	\begin{align*}
	\int \mathds{1}_{ B} ( \rho(X_{\cdot}{\widetilde \omega},z)) F(dz) = K(X_{\cdot}(\widetilde \omega),B ) = \int  \mathds{1}_{B}( \delta(\widetilde  \omega,\cdot,z)) F(dz),
	\end{align*}
	for all Borel sets $B$, for $\widetilde \P$-almost all $\widetilde \omega$. Hence, $\delta = \rho(X_{\cdot},\cdot)$  $F\otimes \P$ almost everywhere. Similarly, $\widetilde \sigma$ can be taken to be equal to the square root of $\widetilde c$ (see \cite[Remark 2.1]{ajbi16}). Thus, \eqref{eq:diffusiondelta} can be written in the form   \eqref{diffusionsdeinvariance} with  $(b,\sigma:={\widetilde c}^{\frac 12},\rho)$.  Moreover,   $\widetilde \P(\{X_t  \in \mathcal{D}, \forall t \geq 0\}=\P(\{X_t  \in \mathcal{D}, \forall t \geq 0\} )=1$,  by the discussion following \cite[Equation (2.1.26)]{JP11}. In view of Part a.,   Theorem \ref{theoremjumps} implies that  \eqref{ourcondsupp}-\eqref{ourconddrift} should hold, so that   \eqref{ourcondsupp2}-\eqref{ourconddrift2} must be satisfied.} 

\end{proof}

 {
\begin{remark}\label{rem:affinejumps} As already mentioned above, in the presence of jumps, the semimartingale formulation given in Theorem \ref{theoremsemi} is  more adapted to affine and polynomial processes than Theorem \ref{theoremjumps}. In fact, affine (resp.~polynomial) jump-diffusions are characterized by an affine (resp.~polynomial) dependence of their triplet $(\widetilde b,\widetilde c,K)$ (e.g.~\cite[Definition 4.2]{vee}). Inspecting the identity in   \eqref{eq:identityKFrho}, it is not clear how this property translates to $\rho$.   
\end{remark}}

\begin{appendices}

{\section{Technical lemmas}}
{For completeness, we provide in the sequel some technical lemmas with their proofs. They are either standard or minor modifications of already known results.} \vs2

The {generalized It\^{o}'s lemma} derived in \cite[Lemma 3.3]{ajbi16} can easily be extended to account for jumps in the following way. 
\begin{lemma}\label{lemmakomlos}
	Assume that  $\sigma$ is continuous and that there exists a solution $X$  to \eqref{diffusionsdeinvariance}. Let $f \in \mathcal{C}^{1,1}_c (\mathbb{R}^d,\mathbb{R})$. Then, there exists an adapted bounded process $\eta$ such that 	
	\begin{equation*}\label{eqkomlos}
	{f(X_t)= f(x) + \int_{0}^{t} \widetilde \eta_s ds + \int_{0}^{t} (D  f \sigma)(X_s)dW_s + \left(f(X_{s-}+\rho(X_{s-},z))-f(X_{s-}) \right)\ast (\mu - dt F(dz))},
	\end{equation*}
	for all $t\ge 0$,  {with $\widetilde \eta_s=  (D  f b)(X_s) + \eta_s + \int \left(f(X_s+\rho(X_s,z))-f(X_s)-Df(X_s)\rho(X_s,z)\right)F(dz)$.} 
\end{lemma}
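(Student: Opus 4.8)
The plan is to derive the formula by applying the classical It\^o formula to smooth approximations of $f$ and passing to the limit, carrying the jump terms along; this is exactly the strategy of \cite[Lemma 3.3]{ajbi16} for the continuous part, so the only genuinely new work is to check that the additional jump contributions converge. First I would mollify: since $f\in\mathcal{C}^{1,1}_c(\R^d,\R)$, its gradient $Df$ is compactly supported and globally Lipschitz with some constant $K$, so convolution with a standard mollifier produces $f_n\in\mathcal{C}^{\infty}_c(\R^d,\R)$ with $f_n\to f$ and $Df_n\to Df$ uniformly, with $\sup_n\|D^2 f_n\|_{\infty}\le K$ and all supports inside a fixed compact set. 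After a standard localization by the stopping times $T_m=\inf\{t:\|X_t\|\ge m\}$, the classical It\^o formula \cite[Theorem I.4.57]{jas} applied to $f_n$ and the solution $X$ of \eqref{diffusionsdeinvariance} reads
\begin{gather*}
f_n(X_t)=f_n(x)+\int_0^t \Lc f_n(X_s)\,ds+\int_0^t (Df_n\sigma)(X_s)\,dW_s\\
{}+\big(f_n(X_{s-}+\rho(X_{s-},z))-f_n(X_{s-})\big)\ast(\mu-\nu),
\end{gather*}
with $\nu(dt,dz)=dt\,F(dz)$ and $\Lc$ as in \eqref{eq:generator}, where reorganising the first-order jump integral $Df_n\,\rho\ast(\mu-\nu)$ against the jump sum produces both the compensated martingale and the compensator integral $\int(f_n(\cdot+\rho)-f_n-Df_n\,\rho)F(dz)$ sitting inside $\Lc f_n$.

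Second, I would let $n\to\infty$ term by term. The boundary values $f_n(X_t)\to f(X_t)$ and $f_n(x)\to f(x)$, and the drift $Df_n(X_s)b(X_s)\to Df(X_s)b(X_s)$, all follow from uniform convergence; the Brownian integral converges in $L^2$ because $Df_n\sigma\to Df\sigma$ uniformly and $\|\sigma\sigma^\top\|\le L(1+\|\cdot\|^2)$ by \eqref{growthconditions}. For the compensator integrand, the Lipschitz property of $Df$ gives the uniform bound $|f_n(x+\rho)-f_n(x)-Df_n(x)\rho|\le\tfrac{K}{2}\|\rho\|^2$, which is $F$-integrable by \eqref{growthconditions}, so dominated convergence yields $\int(f_n(\cdot+\rho)-f_n-Df_n\,\rho)F(dz)\to\int(f(\cdot+\rho)-f-Df\,\rho)F(dz)$. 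Likewise, with $\|Df_n\|_\infty\le K'$ uniformly one has $|f_n(X_{s-}+\rho)-f_n(X_{s-})|\le K'\|\rho\|$, and since $\int\|\rho\|^2F(dz)<\infty$, the isometry for integrals against $\mu-\nu$ together with dominated convergence shows the jump martingales converge (uniformly on compacts, in probability). These are precisely the terms absent from \cite{ajbi16}, and they are routine.

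The main obstacle is the second-order continuous term $\tfrac12\Tr(D^2 f_n(X_s)C(X_s))$: because $f$ is only $\mathcal{C}^{1,1}$, $D^2 f_n$ has no pointwise limit and this term does not converge as it stands. It is, however, uniformly bounded, since $\|D^2 f_n\|_\infty\le K$ and $C=\sigma\sigma^\top$ is continuous on the fixed compact support of $Df_n$. I would therefore invoke Koml\'os' theorem on the sequence $\big(\tfrac12\Tr(D^2 f_n(X_\cdot)C(X_\cdot))\big)_n$, which is bounded in $L^1(\P\otimes dt)$, to extract a subsequence whose Ces\`aro means converge $\P\otimes dt$-a.e.\ to a bounded adapted limit; this limit is the process $\eta$. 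Taking Ces\`aro means of the whole identity (all the remaining terms already converge, hence so do their Ces\`aro means) and passing to the limit along the subsequence then yields the asserted formula with $\widetilde\eta_s=(Df\,b)(X_s)+\eta_s+\int(f(X_s+\rho(X_s,z))-f(X_s)-Df(X_s)\rho(X_s,z))F(dz)$, and finally sending $m\to\infty$ removes the localization. This reproduces the continuous argument of \cite[Lemma 3.3]{ajbi16}, now with the jump terms carried along.
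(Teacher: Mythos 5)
Your argument is correct and follows essentially the same route as the paper: mollify $f$, apply It\^o's formula to $f_n$, observe that the only non-convergent term is the uniformly bounded second-order trace term, apply a Koml\'os-type compactness principle to it, and take the matching convex combinations of the test functions before passing to the limit in every term (the jump terms being handled by the Lipschitz bound on $Df$ together with \eqref{growthconditions}). The only cosmetic differences are that the paper invokes the Delbaen--Schachermayer compactness principle (forward convex combinations converging a.s.) rather than Ces\`aro means along a Koml\'os subsequence, and that it dispenses with your localization since all integrands are already square-integrable under \eqref{growthconditions} and the moment estimate \eqref{eq:momentestimates1}.
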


{
\begin{proof} Since $f\in \calC^{1,1}$ has a compact support, 	we can find a sequence $(f_n)_{n}$ in $\mathcal{C}^\infty$ with  compact support (uniformly) and a constant $K>0$ such that 
	\begin{enumerate}[(i)]
		\item 
		$\|D^2 f_n\| \leq K$, 
		\item
		$\|f_n-f\|+\|D  f_n-D  f\| \leq \frac{K}{n}$,	
		\end{enumerate}
	for all $n\ge 1$. This is obtained by considering a simple mollification of $f$. Set $\widetilde \mu:=\mu - dt F(dz)$. Since $f_n$ is twice differentiable and bounded,  Itô's formula \cite[Theorem I.4.57]{jas} yields 
	\begin{align*}
	f_n(X_t)&= f_n(x) +  \int_{0}^{t} D  f_n(X_s)  \sigma(X_s)dW_s + \left(f_n(X_{s-}+\rho(X_{s-},z))-f_n(X_{s-}) \right)\ast \widetilde \mu \\
	&\;\;\; + \int_{0}^{t} \left( (D  f_n b)(X_s) + \eta^n_s + \int \left(f_n(X_s+\rho(X_s,z))-f_n(X_s)-Df_n(X_s)\rho(X_s,z)\right)F(dz) \right)ds 
	\end{align*}
	in which $\eta^{n}:=\frac{1}{2}{\Tr[D ^2 f_n\sigma\sigma^{\top}](X)}$.  Since {$\sigma\sigma^{\top}$} is continuous, (i) above implies that $(\eta^{n})_{n}$ is uniformly bounded in $L^{\infty}(dt \times d\mathbb{P})$.  By \cite[Theorem 1.3]{del99}, there exists $(\hat{\eta}^n) \in \Conv( \eta^k, k\geq n )$  such that $\hat{\eta}^n \rightarrow \eta$ $dt\otimes d\mathbb{P}$ almost surely. 
	Let $N_n \geq 0$ and $(\lambda^n_k)_{n \leq k \leq N_n}\subset [0,1]$ be such that 
	$\hat{\eta}^n=\sum_{k=n}^{N_n} \lambda^n_k \eta^k$ and $\sum_{k=n}^{N_n} \lambda^n_k =1$. Set  $\hat{f}_n:=\sum_{k=n}^{N_n} \lambda^n_k f_k$. Then, 	
	\begin{align}
	\hat{f}_n(X_t)&= \hat{f}_n(x) +  \int_0^t \widetilde \eta^n ds + \int_{0}^{t} D  \hat f_n(X_s)  \sigma(X_s)dW_s + \left(\hat f_n(X_{s-} +\rho(X_{s-},z))-\hat f_n(X_{s-}) \right)\ast \widetilde \mu\label{eqitofntilde},
	\end{align}
	in which $\widetilde \eta^n := (D  \hat f_n b)(X_s) + \hat \eta^n_s + \int \left(\hat f_n(X_s+\rho(X_s,z))-\hat f_n(X_s)-D\hat f_n(X_s)\rho(X_s,z)\right)F(dz)$. By   dominated convergence, 
	$
	\int_{0}^{t} \hat{\eta}^n_s  ds$ converges a.s.~to $\int_{0}^{t} \eta_s  ds
	$.  
	Moreover,  (ii) implies that  
	\begin{equation*}
	\|\hat{f}_n(X_t)-f(X_t)\| \leq  \sum_{k=n}^{N_n} \lambda^n_k \|\hat{f}_k(X_t)-f(X_t)\| \leq \sum_{k=n}^{N_n} \lambda^n_k \frac{K}{k}\leq \frac{K}{n},
	\end{equation*} 
	so that   $\hat{f}_n(X_t)$ converges a.s.~to $f(X_t)$. Similarly, 
	\begin{align*}
	   & \int_0^t \widetilde \eta^n_s ds  \rightarrow \int_0^t \widetilde \eta_s ds,  \quad  \int_{0}^{t} D\hat{f}_n(X_s) \sigma(X_s)dW_s  \rightarrow \int_{0}^{t} D f(X_s) \sigma(X_s)dW_s, \\
		&\left(\hat f_n(X_{s-}+\rho(X_{s-},z))-\hat f_n(X_{s-}) \right)\ast \widetilde \mu \rightarrow  \left(f(X_{s-}+\rho(X_{s-},z))-f(X_{s-}) \right)\ast \widetilde \mu,
   \end{align*} 
	in 	$L^2(\Omega,\mathcal{F},\mathbb{P})$ as $n \rightarrow \infty$, and therefore a.s.~after possibly considering a subsequence. It thus remains to send $n\to \infty$ in \eqref{eqitofntilde} to obtain the required result.  
\end{proof}
}

The following adapts \cite[Lemma 3.4]{ajbi16} to our setting.

\begin{lemma}\label{lemmadoubleintsto}
	Let $(W_t)_{t \geq 0}$ denote a standard $d$-dimensional Brownian motion on a filtered probability space $(\Omega,\mathcal{F},(\mathcal{F}_t)_{t \geq 0},\mathbb{P})$. Let $\alpha \in \mathbb{R}^d$ and $(\beta_t)_{t\geq 0}$,  $(\gamma_t)_{t\geq 0}$ and  $(\theta_t)_{t\geq 0}$ be  predictable processes taking values respectively in $\mathbb{R}^d$, $\mathbb{M}^{d }$ and $\mathbb{R}$ and satisfying  
	
	\begin{enumerate}[{\rm (1)}]
		\item
		$\beta$ is bounded,
		\item  $\int_{0}^t \|\gamma_s\|^2 ds < \infty$, for all $ t\geq 0$,
		\item
		there exists  $\eta >0$  such that
		\begin{equation}\label{eqgammacontinuity}
		{\int_0^t \int_0^{s}\E\left[\|\gamma_r - \gamma_0\|^2\right]drds=O(t^{2+\eta})},
		\end{equation}
		\item
		$\theta$ is a.s.~continuous at $0$.
	\end{enumerate}
	Suppose that for all $t \geq 0$
	\begin{equation}\label{eqintegraledouble}
	\int_{0}^{t} \theta_s ds + \int_{0}^{t} \left( \alpha + \int_{0}^{s} \beta_rdr +  \int_{0}^{s} \gamma_r dW_r \right)^\top  dW_s  \leq 0.
	\end{equation}
	
	{Then,  $\alpha=0$, $-\gamma_0 \in \mathbb{S}^d_+$, $\theta_0-\frac{1}{2}\Tr(\gamma_0) \leq 0$.}  \end{lemma}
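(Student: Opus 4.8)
The plan is to read off the three assertions from successive orders of the small–time expansion of the left–hand side of \eqref{eqintegraledouble}, which I denote by $Y_t$. Since $Y_0=0$ and $Y_t\le0$ for every $t\ge0$, the path $Y$ has a one–sided maximum at the origin, and each assertion records the constraint this imposes at one scale in $t$. All statements are understood conditionally on $\mathcal F_0$, so that $\alpha,\theta_0,\gamma_0$ may be treated as constants while $W$ is still a Brownian motion.

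\emph{Leading order: $\alpha=0$.} The martingale part $N_t:=\int_0^t(\alpha+\int_0^s\beta_r\,dr+\int_0^s\gamma_r\,dW_r)^\top dW_s$ is a continuous local martingale whose bracket satisfies $\langle N\rangle_t/t\to\|\alpha\|^2$ as $t\downarrow0$, because the integrand tends to $\|\alpha\|^2$ at $s=0$ (the inner drift vanishes by boundedness of $\beta$, and $\int_0^s\gamma_r\,dW_r\to0$ by (2)). By (4), $\int_0^t\theta_s\,ds=\theta_0 t+o(t)$, hence $N_t\le C_\omega t$ for small $t$. If $\alpha\ne0$, the Dambis–Dubins–Schwarz theorem gives $N_t=B_{\langle N\rangle_t}$ with $\langle N\rangle_t\sim\|\alpha\|^2 t$, and the law of the iterated logarithm at $0$ forces $\limsup_{t\downarrow0}N_t/\sqrt t=+\infty$, contradicting $N_t=o(\sqrt t)$. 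Thus $\alpha=0$.

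\emph{Order $t$ and a blow–up.} With $\alpha=0$ I expand the double integral. Writing $\gamma_r=\gamma_0+(\gamma_r-\gamma_0)$ and using $\int_0^t W_s^i\,dW_s^j+\int_0^t W_s^j\,dW_s^i=W_t^iW_t^j-\delta_{ij}t$, one obtains
\[\int_0^t\Big(\int_0^s\gamma_r\,dW_r\Big)^\top dW_s=\tfrac12 W_t^\top S\,W_t-\tfrac t2\Tr(\gamma_0)+A_t+\mathcal E_t,\]
where $S:=\tfrac12(\gamma_0+\gamma_0^\top)$, the term $A_t$ is the mean–zero Lévy area built from the antisymmetric part of $\gamma_0$, and the remainder obeys $\mathbb E[\mathcal E_t^2]=O(t^{2+\eta})=o(t^2)$ by \eqref{eqgammacontinuity}; the $\beta$–contribution is $O(t^{3/2})$ in $L^2$ by (1). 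Rescaling time via Brownian scaling, set $\widehat W^{(n)}_s:=\sqrt n\,W_{s/n}$ and consider $nY_{s/n}$ on $s\in[0,1]$. Each piece rescales exactly and the remainders vanish, so $nY_{\cdot/n}$ converges in law in $C([0,1])$ to the Brownian functional
\[\bar Y_s=\big(\theta_0-\tfrac12\Tr\gamma_0\big)s+\tfrac12\widehat W_s^\top S\,\widehat W_s+\widehat A_s,\]
with $\widehat W$ a standard Brownian motion and $\widehat A$ its Lévy area. As each prelimit lies in the closed set $\{f:\sup_{[0,1]}f\le0\}$, the portmanteau theorem yields $\bar Y_s\le0$ for all $s\in[0,1]$, almost surely, which reduces the whole problem to a statement about one standard Brownian motion.

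\emph{Reading off the conditions, and the main obstacle.} If $S$ had a positive eigenvalue with unit eigenvector $v$, then along the LIL–extremal times of the one–dimensional Brownian motion $v^\top\widehat W$, which are independent of the orthogonal components, one finds $s_k\downarrow0$ with $(v^\top\widehat W_{s_k})^2\sim2s_k\ln\ln(1/s_k)$ while $\|\widehat W_{s_k}\|^2$ stays of order $s_k$, so that $\widehat W_{s_k}^\top S\,\widehat W_{s_k}\gg s_k\gg|\widehat A_{s_k}|$ and $\bar Y_{s_k}>0$ — a contradiction; hence $S\preceq0$, i.e. $-\gamma_0\in\mathbb S^d_+$, and in particular $\tfrac12\widehat W_s^\top S\,\widehat W_s\le0$. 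For the drift I use that the scale–invariant pair $(\widehat W_s/\sqrt s,\,\widehat A_s/s)$ becomes, after the Lamperti change $s=e^{-u}$, a stationary ergodic process whose invariant law charges every neighbourhood of $(0,0)$; selecting $s_n\downarrow0$ along which $\widehat W_{s_n}/\sqrt{s_n}\to0$ and $\widehat A_{s_n}/s_n\to0$, dividing $\bar Y_{s_n}\le0$ by $s_n$ gives $\theta_0-\tfrac12\Tr\gamma_0\le0$. I expect these last two steps to be the crux: the quadratic form $\tfrac12 W_t^\top S W_t$ and the area $A_t$ are genuinely random, of the same order $t$ as the drift, and fail to converge, so the conditions must be isolated along carefully chosen subsequences — the LIL together with the independence of $v^\top\widehat W$ from the orthogonal directions for the sign–definiteness of $S$, and the recurrence near $(0,0)$ of the scale–invariant functional for the scalar drift inequality.
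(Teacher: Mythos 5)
Your reduction to a ``constant--coefficient'' inequality is essentially the paper's: the paper also splits off the remainder coming from $\theta-\theta_0$, $\beta$ and $\gamma-\gamma_0$, shows it is $o(t)$ in probability (continuity of $\theta$ at $0$, \cite[Proposition 3.9]{cst05a} for the $\beta$-term, and \eqref{eqgammacontinuity} for the $\gamma$-term), and then concludes by \emph{citing} \cite[Lemma 2.1]{buc}, which is exactly the statement that from
$(\theta_0-\tfrac12\Tr\gamma_0)t+\alpha^\top W_t+\tfrac12 W_t^\top\gamma_0 W_t+(\text{L\'evy area})+o(t)\le 0$
one may read off $\alpha=0$, $-\gamma_0\in\mathbb{S}^d_+$ and $\theta_0-\tfrac12\Tr(\gamma_0)\le 0$. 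You are therefore re-proving that cited lemma from scratch. Your $\alpha=0$ step (Dambis--Dubins--Schwarz plus the LIL) is correct, and the scaling/portmanteau reduction to $\sup_{[0,1]}\bar Y\le 0$ a.s.\ is a clean way to package the remainder estimates.

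The genuine gap is in the step $-\gamma_0\in\mathbb{S}^d_+$. The chain ``$\widehat W_{s_k}^\top S\,\widehat W_{s_k}\gg s_k\gg|\widehat A_{s_k}|$'' is false on its right-hand side: by exact Brownian scaling $\widehat A_s$ has the law of $s\widehat A_1$, so the L\'evy area is of order $s$ (not $o(s)$) at a typical time, and its $\limsup$ as $s\downarrow 0$ is of order $s\ln\ln(1/s)$ --- precisely the order of the gain $(v^\top\widehat W_{s_k})^2\sim 2s_k\ln\ln(1/s_k)$ you are exploiting. Moreover $\widehat A$ contains the cross integrals $\int (v^\top\widehat W)\,d\widehat W^\perp$ and $\int \widehat W^\perp\,d(v^\top\widehat W)$, so it is \emph{not} independent of $v^\top\widehat W$, and ``independence of the orthogonal directions'' controls neither the area nor the negative eigendirections of $S$ at the random LIL-extremal times $s_k$; both must be shown to be $o(s_k\ln\ln(1/s_k))$ along a further subsequence, and that is the crux you leave unaddressed. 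The standard repair is Strassen's functional LIL: at LIL-extremal times of $v^\top\widehat W$ the whole rescaled path is forced close to $u\mapsto uv$, for which the area functional vanishes and the quadratic form equals the positive eigenvalue --- this is essentially what \cite[Lemma 2.1]{buc} packages. Note also that the lemma claims $-\gamma_0\in\mathbb{S}^d_+$, i.e.\ $\gamma_0$ is \emph{symmetric} negative semi-definite; your argument at best concerns the symmetric part $S$ and never shows the antisymmetric part vanishes (which again needs the functional LIL, applied to loops). Finally, the ergodicity/full-support assertions used for $\theta_0-\tfrac12\Tr(\gamma_0)\le0$ are plausible but unproved as stated; taking expectations is not a substitute, since it only yields $\theta_0\le 0$, which is strictly weaker.
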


\begin{proof}	Since $(W^i_t)^2 = 2 \int_0^t W^i_s d W^i_s + t$,   \eqref{eqintegraledouble} reduces to 
	\begin{equation*}
	(\theta_0- \frac{1}{2} \Tr(\gamma_0)) t + \sum_{i=1}^d \alpha^i W^i_t + \sum_{i=1}^d \frac{\gamma^{ii}_0}{2} (W^i_t)^2 + \sum_{1 \leq i \neq j \leq d} \gamma^{ij}_0 \int_0^t W^i_s d W^j_s + R_t \leq 0,
	\end{equation*}
	where
	\begin{eqnarray*}
		R_t &=& \int_{0}^{t} (\theta_s - \theta_0) ds + \int_{0}^{t} \left(  \int_{0}^{s} \beta_rdr \right)^\top  dW_s  +  \int_{0}^{t}\left(\int_{0}^{s} (\gamma_r -\gamma_0) dW_r \right)^\top  dW_s \\
		&=:&  R^1_t + R^2_t + R^3_t.
	\end{eqnarray*}
	In view of    \cite[Lemma 2.1]{buc}, it suffices to show that  $R_t/t \rightarrow 0$ in probability. To see this, first note that $R^1_t = o(t)$  a.s.~since $\theta$ is continuous at $0$. Moreover, \cite[Proposition 3.9]{cst05a} implies that $R^2_t = o(t)$ a.s., as $\beta$ is bounded. Finally,  {it follows from}  \eqref{eqgammacontinuity} {that} $\frac{R^3_t}{t} \rightarrow 0$ in $L^2${, and hence in probability}. We conclude by applying \cite[Lemma 2.1]{buc}.
\end{proof}

We also used the following elementary lemma which extends \cite[Lemma 5.4]{xio} to account for jumps (see also \cite[Corollaries 2 and 3 of Theorem 5.13]{lip01}). 

\begin{lemma}\label{lemmacondexpectationfiltration}
	Let $B,B^{\perp}$ denote two independent Brownian motions and $\mu$ a Poisson random measure on $\R_+ \times \R^d$  with compensator $dt \otimes F $ on a filtered probability space $(\Omega, \Fc, (\Fc_t)_{t \geq 0},\P)$. Let $(\gamma_s)_{s\geq0}$ be an adapted square integrable process and $\xi: \R_+\times \R^d \mapsto \R^d$ be a predictable process such that $ \E\left[\int_0^t\int  \|\xi(s,z)\|^2 F(dz)ds\right] < \infty$, for all $t\geq 0$.  Define the sub-filtration $\mathcal{F}^B_t= \sigma\{B_s, s \leq t\} \subset \mathcal{F}_t$ and denote by $\widetilde \mu=\mu-dtF(dz)$. Then $\mathbb{P}-a.s.$, for all $ t \geq 0$,
	\begin{align*}
	\mathbb{E}_{\mathcal{F}_t^B}\left[ \int_{0}^{t} \gamma_s dB_s \right] &= \int_{0}^{t}  \mathbb{E}_{\mathcal{F}_s^B}\left[ \gamma_s \right]dB_s,  \;\;
	\mathbb{E}_{\mathcal{F}_t^B}\left[ \int_{0}^{t} \gamma_s dB_s^{\perp} \right] = \mathbb{E}_{\mathcal{F}_t^B}\left[ \xi \ast \widetilde \mu  \right] =  0.
	\end{align*}
	Moreover, it holds similarly for any integrable adapted process $\theta$ that
	\begin{align*}
		\mathbb{E}_{\mathcal{F}_t^B}\left[ \int_{0}^{t} \theta_s ds \right] = \int_{0}^{t}  \mathbb{E}_{\mathcal{F}_s^B}\left[ \theta_s \right]ds.  
	\end{align*} 
	 
\end{lemma}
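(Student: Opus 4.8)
The plan is to prove the three conditional-expectation identities of Lemma \ref{lemmacondexpectationfiltration} by reducing each to a statement about simple (elementary) integrands and then passing to the limit. The guiding principle throughout is that conditioning on $\Fc^B_t$ kills any martingale increment that is orthogonal to the $\sigma$-algebra generated by $B$, and that the remaining $B$-driven part is handled by the tower property together with the independence of increments.

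\textbf{The time-integral identity.} I would start with the easiest claim, $\E_{\Fc^B_t}[\int_0^t \theta_s\, ds] = \int_0^t \E_{\Fc^B_s}[\theta_s]\, ds$. By Fubini (justified by integrability of $\theta$), the left-hand side equals $\int_0^t \E_{\Fc^B_t}[\theta_s]\, ds$, so it suffices to show $\E_{\Fc^B_t}[\theta_s] = \E_{\Fc^B_s}[\theta_s]$ for $s \leq t$. This is exactly the statement that $\E_{\Fc^B_s}[\theta_s]$ is already $\Fc^B_t$-measurable and that conditioning further on $\Fc^B_t$ adds no information about $\theta_s$ beyond $\Fc^B_s$; concretely, it follows because the increments $(B_r - B_s)_{r \in [s,t]}$ are independent of $\Fc_s \supset \sigma(\theta_s)$, so that $\E_{\Fc^B_t}[\theta_s] = \E_{\Fc^B_s}[\theta_s]$.

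\textbf{The stochastic-integral identities.} For the three remaining identities I would first verify them for simple integrands of the form $\gamma_s = \sum_k \zeta_{t_k} \mathds{1}_{(t_k, t_{k+1}]}(s)$ with $\zeta_{t_k} \in \Fc_{t_k}$, and analogously for $\xi$. On such an integrand the stochastic integral becomes a finite sum $\sum_k \zeta_{t_k}(B_{t_{k+1}} - B_{t_k})$, and applying $\E_{\Fc^B_t}$ term by term, the tower property through $\Fc_{t_{k+1}}$ together with the fact that $(B_{t_{k+1}}-B_{t_k})$ is $\Fc^B_t$-measurable yields the claimed identity for the $dB$-case. For the $dB^\perp$ and $\xi \ast \widetilde\mu$ cases, the key point is that $B^\perp$ and $\mu$ are independent of $B$, so each increment $\zeta_{t_k}(B^\perp_{t_{k+1}} - B^\perp_{t_k})$ (resp.\ the compensated jump increment) has conditional expectation zero given $\Fc^B_t \vee \Fc_{t_k}$, whence zero given $\Fc^B_t$. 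Then I would pass to general $\gamma,\xi$ by approximating in $L^2(dt \times d\P)$ (resp.\ $L^2(F\,dz\,dt\times d\P)$), using the It\^o isometry to transfer $L^2$-convergence of the integrands to $L^2$-convergence of the integrals, and the $L^2$-contractivity of conditional expectation to pass to the limit on both sides.

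\textbf{Main obstacle.} The routine parts are the isometry estimates and the simple-integrand computations; the one point requiring genuine care is the interchange $\E_{\Fc^B_t}[\int_0^t \gamma_s\, dB_s] = \int_0^t \E_{\Fc^B_s}[\gamma_s]\, dB_s$, i.e.\ that pulling the conditional expectation inside produces the integrator $\E_{\Fc^B_s}[\gamma_s]$ evaluated at the \emph{running} time $s$ rather than at the terminal time $t$. On simple integrands this is transparent because $\E_{\Fc^B_t}[\zeta_{t_k}(B_{t_{k+1}}-B_{t_k})] = \E_{\Fc^B_{t_k}}[\zeta_{t_k}](B_{t_{k+1}}-B_{t_k})$, using independence of the future $B$-increment from $\Fc_{t_k}$; I expect the only delicate step in the limit to be checking that $s \mapsto \E_{\Fc^B_s}[\gamma_s]$ is itself a well-defined predictable $L^2$ integrand, which follows from the optional/predictable projection but which I would phrase concretely via the simple-function approximation to avoid invoking heavier machinery.
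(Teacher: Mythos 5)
Your proposal is correct and follows essentially the same route as the paper: reduction to simple predictable integrands, the tower property through the decomposition $\Fc^B_t=\Fc^B_{t_k}\vee\sigma(B_r-B_{t_k},\,t_k\le r\le t)$ combined with independence of $B^\perp$ and $\mu$ from $B$, and then passage to the limit via the It\^o isometry and density of simple processes in $L^2$. The paper merely writes out this argument for the jump integral and delegates the $dB$, $dB^\perp$ and $ds$ identities to \cite[Lemma 5.4]{xio}, whereas you spell them all out; the mechanics are identical.
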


\begin{proof}{
	We sketch the proof for the jump integral. See \cite[Lemma 5.4]{xio}  for the other identities. Let $\xi$ be simple and predicable, \textit{i.e.} $\xi(s,z)= \sum_{i=1}^n \xi_i \mathds{1}_{(t_i,t_{i+1}]}(s) \mathds{1}_{A_i}(z)$, in which $\xi_i$ is bounded and  $\Fc_{t_i}$-measurable, $(t_i)_{1 \leq i \leq n}$ a subdivision of $[0,t]$ and $A_i \subset \R^d$ Borel sets such that $F(A_i) <
	\infty$. We can write 	$\Fc^B_{t}= \Fc^B_{t_i}  \vee \Fc^B_{t_i,t}$ where $\Fc^B_{t_i,t}:= \sigma(B_{s}-B_{t_i}, t_i \leq s \leq t)$. It follows from  \cite[Theorem II.6.3]{ike} that $\mu$ and $B$ are independent and
	\begin{align*}
	\E_{\Fc_t^B} \left[  \xi \ast \widetilde \mu  \right]	&= \sum_{i=1}^n \E_{\Fc^B_{t_i}  \vee \Fc^B_{t_i,t}} \left[  \xi_i \widetilde \mu \left((t_i,t_{i+1}] \times A_i\right)   \right] \\ 
	&= \sum_{i=1}^n \E \left[ \E\left[\xi_i    \widetilde \mu \left((t_i,t_{i+1}] \times A_i\right)  \mid \Fc_{t_i}  \vee \Fc^B_{t_i,t} \right]  \mid \Fc^B_{t_i}  \vee \Fc^B_{t_i,t} \right] \\
	&= \sum_{i=1}^n \E \left[ \xi_i \E\left[     \widetilde \mu \left((t_i,t_{i+1}] \times A_i\right) \mid \Fc_{t_i}  \vee \Fc^B_{t_i,t} \right]  \mid \Fc^B_{t_i}  \vee \Fc^B_{t_i,t} \right] \\
	&=  0.
	\end{align*}	
	For general $\xi$,  the result follows from It\^o's isometry and the fact that simple processes are dense in $L^2(dt \otimes F)$ (see \cite[Lemma 4.1.4]{A04}). }
\end{proof}

For completeness, we recall  well-known moment estimates for  \eqref{diffusionsdeinvariance} under  \eqref{growthconditions}.

\begin{proposition} Let $X$ denote a weak solution of \eqref{diffusionsdeinvariance}  starting at $x$. Under the growth conditions \eqref{growthconditions}, there exists $M^1_{x,L}>0$ such that the following moment estimates hold:
\begin{align}
\E \left[ \sup_{s \leq t}  \|X_s \|^2 \right] &\leq 4\left( \| x\|^2 + Lt(t+8)\right)e^{4Lt(t+8)}, &&\; \mbox{ for all } t\geq0, \label{eq:momentestimates1} \\
\E \left[  \|X_t-X_s \|^2 \right] &\leq   M^1_{x,L}|t-s|, &&\; \mbox{ for all } s, t \leq 1. \label{eq:momentestimates2}
\end{align}
\end{proposition}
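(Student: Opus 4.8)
The plan is to establish both bounds by the classical localization-and-Gronwall argument, treating the drift, the continuous martingale and the compensated jump martingale separately. First I would write the solution in integrated form, $X_t = x + \int_0^t b(X_s)\,ds + \int_0^t \sigma(X_s)\,dW_s + \int_0^t\int \rho(X_{s-},z)(\mu(ds,dz)-F(dz)ds)$, and introduce the stopping times $\tau_n := \inf\{t\ge 0 : \|X_t\| \ge n\}$ so that the stopped process is bounded and every expectation below is finite. Writing $\phi_n(t) := \E[\sup_{s\le t\wedge \tau_n}\|X_s\|^2]$, the aim is to obtain a bound on $\phi_n$ uniform in $n$ and then let $n\to\infty$ by Fatou's lemma.

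Next I would estimate the four contributions via $\|a+b+c+d\|^2\le 4(\|a\|^2+\|b\|^2+\|c\|^2+\|d\|^2)$. The drift is controlled by Cauchy--Schwarz, $\sup_{s\le t}\|\int_0^s b(X_r)\,dr\|^2 \le t\int_0^t\|b(X_r)\|^2\,dr$; the continuous martingale by Doob's $L^2$ inequality (factor $4$) together with It\^o's isometry, giving $4\int_0^t\E[\Tr(\sigma\sigma^\top(X_r))]\,dr$; and the compensated jump martingale by Doob's inequality together with the isometry for compensated Poisson integrals, giving $4\int_0^t\E[\int\|\rho(X_r,z)\|^2 F(dz)]\,dr$. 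Bounding $\Tr(\sigma\sigma^\top)$ by $\|\sigma\sigma^\top\|$ and invoking the growth condition \eqref{growthconditions}, each integrand is dominated by $L(1+\phi_n(r))$, so that $\phi_n(t) \le 4\|x\|^2 + 4(t+8)\int_0^t L(1+\phi_n(r))\,dr$, where the weight $4(t+8)=4t+32$ collects the factor $4t$ coming from the drift and the two factors $16$ coming from the martingale parts.

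Then Gronwall's lemma, applied to $\phi_n(t)\le \bigl(4\|x\|^2 + 4Lt(t+8)\bigr) + 4L(t+8)\int_0^t\phi_n(r)\,dr$, yields $\phi_n(t)\le 4(\|x\|^2 + Lt(t+8))e^{4Lt(t+8)}$ uniformly in $n$, and \eqref{eq:momentestimates1} follows by monotone convergence as $n\to\infty$. For \eqref{eq:momentestimates2} I would write $X_t-X_s$ as the sum of the three integrals over $(s,t]$, apply $\|\cdot\|^2\le 3(\cdots)$ together with Cauchy--Schwarz and the same two isometries, and bound $1+\E\|X_r\|^2 \le 1+\phi(1)$ for $r\le 1$ using the first estimate \eqref{eq:momentestimates1}; since $(t-s)^2\le |t-s|$ for $s,t\le 1$, all terms collapse into $M^1_{x,L}|t-s|$ with $M^1_{x,L}$ an explicit multiple of $L(1+\phi(1))$.

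The only genuinely delicate point is the localization: one must verify a priori finiteness of $\phi_n$ and that, after stopping, the continuous and jump martingale terms have zero expectation, so that the Gronwall inequality is legitimate before passing to the limit. The remaining difficulty is purely bookkeeping of constants, needed to reproduce the precise factor $t(t+8)$ in the statement.
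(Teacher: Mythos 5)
Your proposal is correct and follows essentially the same route as the paper: the same $4(a^2+b^2+c^2+d^2)$ decomposition, Cauchy--Schwarz for the drift, Doob/BDG plus the isometries for the two martingale parts, the growth condition \eqref{growthconditions}, and Gr\"onwall, with the same constant bookkeeping yielding $4L(t+8)$. The explicit localization via $\tau_n$ is a sensible refinement that the paper's proof leaves implicit.
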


	\begin{proof}{
		Set $g_t:= \E \left[ \sup_{s \leq t}  \|X_s \|^2 \right]$. By convexity of  $y \mapsto y^2$, we have $(a+b+c+d)^2=16 ( \frac{a+b+c+d}{4})^2 \leq 4 (a^2+b^2+c^2+d^2)$. Combined with Cauchy–Schwarz and  Burkholder-Davis-Gundy inequalities, we get for all $u \leq t$
		\begin{align*}
		g_u &\leq 4 \left (\|x\|^2 + t \int_0^u \E \left[\|b(X_s)\|^2 \right] ds  +  4 \int_0^u \E \left[\|C(X_s)\|\right] ds  + 4   \int_{[0,u] \times \R^d} \E \left[\|\rho(X_{s-},z)\|^2 \right] F(dz) ds  \right).
		\end{align*}
		The growth conditions \eqref{growthconditions} now yield 
		\begin{align*}
		g_u 
		&\leq 4 \left (\|x\|^2 + Lt( t + 8 ) +  L(t+8)\int_0^u g_s ds \right), \quad \mbox{ for all } u \leq t.
		\end{align*}
		Finally, \eqref{eq:momentestimates1} follows from Grönwall's Lemma. Moreover, for all  $ s, t \leq 1$, by Cauchy-Schwarz inequality, Itô's isometry and \eqref{growthconditions} we have
		\begin{align*}
		\E\left[ \|X_t-X_s \|^2\right] &\leq 3 \left( |t-s|  \int_s^t \E \left[\|b(X_r)\|^2 \right] dr  + \int_s^t \E \left[\|C(X_r)\|+   \int_{\ \R^d} \|\rho(X_{r-},z)\|^2  F(dz)\right] dr\right)\\
		&\leq 3 \left( L|t-s|^2(1+g_1)  + L|t-s|(1+g_1)\right)\\
		&\leq 6L(1+g_1) |t-s|.
		\end{align*}
		Hence, \eqref{eq:momentestimates2} follows from \eqref{eq:momentestimates1}.}
	\end{proof}

\end{appendices}

\bibliographystyle{unsrtnat} 

\end{document}